\def\RSsubtxt{section~}\newref{sub}{name = \RSsubtxt}}
\def\RSthmtxt{theorem~}\newref{thm}{name = \RSthmtxt}}
\def\RSlemtxt{lemma~}\newref{lem}{name = \RSlemtxt}}
\numberwithin{equation}{section}
\numberwithin{figure}{section}
  \theoremstyle{plain}
  \newtheorem*{thm*}{\protect\theoremname}
\theoremstyle{plain}
\newtheorem{thm}{\protect\theoremname}[section]
  \theoremstyle{plain}
  \newtheorem{prop}[thm]{\protect\propositionname}
  \theoremstyle{definition}
  \newtheorem{defn}[thm]{\protect\definitionname}
  \theoremstyle{plain}
  \newtheorem{lem}[thm]{\protect\lemmaname}
  \theoremstyle{plain}
  \newtheorem{cor}[thm]{\protect\corollaryname}
  \theoremstyle{remark}
  \newtheorem{rem}[thm]{\protect\remarkname}
  \theoremstyle{claim}
  \newtheorem{example}[thm]{\protect\examplename}
\theoremstyle{note}
\theoremstyle{acknowledgment}
  \newtheorem{acknowledgment}[thm]{\protect\acknowledgmentname}
\theoremstyle{fact}
  \providecommand{\corollaryname}{Corollary}
  \providecommand{\definitionname}{Definition}
  \providecommand{\lemmaname}{Lemma}
  \providecommand{\claimname}{Claim}
  \providecommand{\propositionname}{Proposition}
  \providecommand{\remarkname}{Remark}
  \providecommand{\theoremname}{Theorem}
\providecommand{\theoremname}{Theorem}
\providecommand{\notename}{Note}
\providecommand{\examplename}{Example}
\providecommand{\acknowledgmentname}{Acknowledgment}
\providecommand{\factname}{Fact}
\providecommand{\sublemmaname}{Sublemma}
\begin{document}

\title{Division algebras graded by a finite group}

\author{Eli Aljadeff$^{*}$, Darrell Haile$^{**}$ \and Yaakov Karasik$^{***}$}

\address{* Department of Mathematics, Technion - Israel Institute of Technology,
Haifa 32000, Israel and Technion-Guangdong Israel Institute of Technology, Shantou, Guangdong, China.}

\address{** Department of Mathematics, Indiana University at Bloomington, Bloomington, IN 47405-7000.}

\address{*** Mathematisches Institut, Justus-Liebig-Univerist\"{a}t, Giessen, Giessen, Germany.}


\email{aljadeff 'at' technion.ac.il (E. Aljadeff),}

\email{haile 'at' indiana.edu (D. Haile).}

\email{Igor.Karasik 'at' math.uni-giessen.de (Y. Karasik),}

\keywords{graded algebras, graded division algebras, division algebra which are $G$-graded, $G$-graded simple algebras,  $k$-forms of algebras, twisted forms}

\thanks {The first author was partially supported by the ISRAEL SCIENCE FOUNDATION
(grant No. 1516/16).}
\thanks {Much of this work was done while the second author was a visitor at Guangtong-Technion Israel Institute of Technology. He would like to thank the institute for its support}

\begin{abstract}
Let $G$ be a finite group and $D$ a division algebra faithfully $G$-graded, finite dimensional over its center $K$, where $char(K) = 0$. Let $e \in G$ denote the identity element and suppose $K_{0} = K\cap D_{e}$, the $e$-center of $D$, contains $\zeta_{n_{G}}$, a primitive $n_{G}$-th root of unity, where $n_{G}$ is the \textit{exponent} of $G$. To such a $G$-grading on $D$ we associate a normal abelian subgroup $H$ of $G$, a positive integer $d$ and an element of $Hom(M(H), \mu_{n_{H}})^{G/H}$. Here $\mu_{n_{H}}$ denotes the group of $n_{H}$-th roots of unity, $n_{H} = exp(H)$, and $M(H)$ is the Schur multiplier of $H$. The action of $G/H$ on $\mu_{n_{H}}$ is trivial and the action on $M(H)$ is induced by the action of $G$ on $H$.

Our main theorem is the converse:  Given an extension $1\rightarrow H\rightarrow G\rightarrow Q\rightarrow 1$, where $H$ is abelian, a positive integer $d$, and an element of $Hom(M(H), \mu_{n_{H}})^Q$, there is a division algebra as above that realizes these data. We apply this result to classify the $G$-graded simple algebras whose $e$-center is an algebraically closed field of characteristic zero that admit a division algebra form whose $e$-center contains $\mu_{n_{G}}$.
\end{abstract}

\maketitle

\section{introduction}

If $A$ is an algebra over a field $k$ and $G$ is a finite group,  we say $A$ is {\it $G$-graded} if there is a decomposition $A=\oplus_{g\in G}A_g$ of $A$ into a direct sum of $k$-vector spaces such that $A_{g_1}A_{g_2}\subset A_{g_1g_2}$ for all $g_1,g_2\in G$.  We say the grading is {\it faithful} if $A_g\not= 0$ for all $g\in G$.  In this paper we are interested in the case where $A=D$ is a division algebra over $k$, finite dimensional over its center $K$ and faithfully graded by a finite group $G$. We assume throughout that the field $k$ is of characteristic zero and that, unless otherwise stated, $k$ contains the group $\mu_{n_{G}}$ of $n_{G}$-th roots of unity,  where $n = exp(G)$.

\begin{rem} \label{finite dimensional remark} We emphasize that we are not assuming there is a field $F$ containing $k$ such that $D$ is both $G$-graded and finite dimensional over $F$. In fact the existence of such a field will be a consequence of our results (see Theorem \ref{general structural theorem}).
\end{rem}
We begin with a brief description of our results followed by a more detailed discussion:  Writing $D=\oplus_{g\in G}D_g$ we see easily that $D_e$ is a division algebra of degree $d$ (say) over its center $L$, and $k=k\cdot 1\subseteq D_e$  (However it is not necessarily the case that the center $K$ of $D$ is contained in $D_e$ or that $K$ is even a graded subalgebra of $D$). The group $G$ acts on the center $L$ of $D_e$ and we let $H$ denote the kernel of the action.  We obtain in this way an extension $1\rightarrow H\rightarrow G\rightarrow G/H\rightarrow 1$ and a   two-cocycle $\alpha$ with class in $H^2(H, L^{*})$. It turns out (but is not trivial) that the condition that $k$ contains $\mu_{n_{G}}$ forces $H$ to be abelian. Moreover the class of $\alpha$ is $G/H$-invariant, that is, fixed by the natural action of $G/H$ on $H^2(H, L^{*})$ and gives rise to a $G/H$-invariant element in $Hom(M(H), \mu_{n_{H}})$, where $n_{H} = exp(H)$ and $M(H)$ is the Schur multiplier of $H$.

Our main result (Theorem \ref{Main realizable triple theorem}) is the converse:  Given an extension of groups $1\rightarrow H\rightarrow G\rightarrow Q\rightarrow 1$ where $H$ is abelian, a positive integer $d$, and a $Q$-invariant element of $Hom(M(H), \mu_{n_{H}})$, where the action of $Q$ on $\mu_{n_{H}}$ is trivial, there is a field $k$ containing $\mu_{n_{H}}$ and a division algebra $D$ over $k$, faithfully graded by the group $G$ and giving rise to the prescribed data. We apply our result to classify the $G$-graded simple algebras over an algebraically closed field of characteristic zero that admit a division algebra form faithfully $G$-graded over a field $k$ containing $\mu_{n_{G}}$ (see Theorem \ref{Simple algebras forms}).

\begin{rem}
As the reader may have noted two subgroups of the group of roots of unity will play an important role in the paper, namely $\mu_{n_{G}}$ where $n = exp(G)$ and $\mu_{n_{H}}$ where $n = exp(H)$. As mentioned above, a $G$-grading on a division algebra $D$ over $k$, finite dimensional over its center gives rise to a certain normal subgroup $H$ of $G$, and the existence of $\mu_{n_{G}}$ in $k$ forces the group $H$ to be abelian. Also, the $G$-grading on $D$ gives rise to an element in $Hom(M(H), L^{*})$ and it is well known (and easy to prove) that if $H$ is abelian $\exp(M(H)) \mid exp(H)$. It follows that because $\mu_{n_{H}} \leq k^{*} \leq L^{*}$, the natural inclusion $Hom(M(H), \mu_{n_{H}}) \hookrightarrow Hom(M(H), L^{*})$ is in fact onto.
\end{rem}

We proceed to the more detailed description. We begin the paper by determining the general structure of gradings by the group $G$ on $K$-central division algebras (so $K$ contains the base field $k$ which is assumed to contain $\mu_{n_{G}}$) and break the analysis into two cases:

 \begin{enumerate}

 \item
The case in which  the center $K$ is contained in the $e$-homogeneous component.

 \item

The general case, in which  the center $K$ may not be contained in $D_{e}$,  and  may not be $G$-graded.

 \end{enumerate}

There are  three types of gradings on division algebras. Our first two theorems (for the two cases mentioned above) will show that  any  grading by a finite group is a combination of these three types:

\begin{enumerate}
\item
The trivial grading: For $D$ any finite dimensional division algebra and $G=\{1\}$.
\item
The case of twisted group algebras:  Let $H$ be a finite group and consider a twisted group algebra $L^{\alpha}H$ where $\alpha$ is a $2$-cocycle $H \times H \rightarrow L^{*}$, and  where the action of $H$ on $L^{*}$ is trivial. Because $char(L) = 0$ the twisted group algebra is semisimple but in general not simple. It seems to be difficult to determine the finite groups $H$ which admit a $2$-cocycle over an aribitrary field $L$ of characteristic zero such that the twisted group algebra $L^{\alpha}H$ is a division algebra.  In our case, in which the base field contains $\mu_{n_{H}}$, we will see that the group $H$ must be abelian (Proposition \ref{abelian group twisted group algebra proposition}). It is easy to show that for any abelian group $H$   we can find a cocycle $\alpha$ and a field $L$ such that the twisted group algebra is a field and so in particular a finite dimensional division algebra. The  case where $L^{\alpha}H$ is central over $L$ is of special interest. These are obtained precisely when $H$ is abelian of the form $A \times A$ and the cocycle is a suitable nondegenerate cocycle on $H$ (see Remark \ref{central type remark} below).

\item

The crossed product grading:  If $L/K$ is a Galois extension with $G$ as Galois group, we can construct the skew group algebra $L_{t}G$.  As a left $L$-vector space ,  $L_tG$  is isomorphic to the group algebra $LG$.   We denote its elements by $\sum_{\sigma \in G}l_{\sigma}u_{\sigma}$, where $l_{\sigma} \in L$ and $u_{\sigma}$ is a symbol in $L_{t}G$ corresponding to the element $\sigma \in G$. The product is defined using distributivity and so as to satisfy the condition
$$lu_{\sigma}yu_{\tau} = l\sigma(y)u_{\sigma\tau}.$$

It is well known that the algebra $L_{t}G$ is isomorphic to $M_{n}(K)$ where $n = ord(G)$. In particular the skew group algebra structure gives a $G$-grading on $M_{n}(K)$.
Now we can twist the multiplication by a $2$-cocycle on $G$ with coefficients in $L^{*}$, where the action of $G$ on $L^{*}$ is induced by the Galois action on $L$. If $\alpha: G \times G \rightarrow L^{*}$ is such a $2$-cocycle we construct the crossed product algebra, denoted by $L_{t}^{\alpha}G$, which again is isomorphic to $LG$ and $L_{t}G$ as a left vector space over $L$ with the product  determined by the formula
$$lu_{\sigma}yu_{\tau} = l\sigma(y)\alpha(\sigma, \tau)u_{\sigma\tau}.$$

Cohomologous $2$-cocycles in $H^{2}(G, L^{*})$ yield $G$-graded isomorphic algebras over $K$ and in particular isomorphic algebras over $K$. On the other hand, noncohomologous $2$-cocycles yield nonisomorphic $K$-central simple algebras and in particular nonisomorphic $G$-graded algebras.

The crossed product $L_{t}^{\alpha}G$ is $K$-central simple and is a $G$-graded twisted form of $L_{t}G$. It is well known (we provide a proof below) that for any finite group $G$ one can find a $G$-Galois extension $L/K$ and a cocycle $\alpha$ such that $L_{t}^{\alpha}G$ is a $K$-central division algebra. Moreover one can find such crossed product division algebras for which the center contains an algebraically closed field of characteristic zero and in particular contains $\mu_{n_{G}}$. Note that here the $e$-component is $L$, a maximal subfield  in $L_{t}^{\alpha}G$.

Much more generally a finite dimensional algebra $A$ faithfully graded by the group $G$ is called a ring theoretic $G$-crossed product if every homogeneous component contains an invertible element.  The standard notation is $A_e* G$,  where $A_e$ denotes the $e$-component of the grading. Of course in this sense every division algebra faithfully graded by $G$ is a $G$-crossed product.  The (classical) crossed product $L_{t}^{\alpha}G$  described above is a special case and the source of the name.  Moreover if $H$ is a normal subgroup of $G$ then $A$ may also be viewed as a $G/H$-crossed product, where the $e$-component is now $A_H=\oplus_{h\in H}A_h$.  So the crossed product $A_e*G$ is also a crossed product  $A_H*G/H$.  This kind of construction, originally due to Teichmuller, can be found in $( $\cite{Passman}, \cite{Teichmuller} and \cite{Tignol}$)$.  We will see that in the case of a division algebra faithfully graded by a group $G$ over a field containing $\mu_{n_{G}}$, we can say much more: In particular we can choose the normal subgroup $H$ to be abelian with the division subalgebra $D_H$ a twisted group algebra over $D_{e}$ (see case (2) above).

\end{enumerate}

\begin{rem} \label{central type remark}
Classically, a finite group $G$ is said to be of central type if it has a complex irreducible representation of degree $[G:Z]^{1/2}$ where $Z=Z(G)$ is the center of $G$. In particular the index of $Z$ in $G$ must be a square. Because a linear representation of $G$ gives rise to a $2$-cocycle $\alpha$ on $G/Z$ with values in $\mathbb{C}^{*}$ and to a projective representation of $G/Z$ corresponding to $\alpha$, researchers borrowed that terminology and defined (nonclassically) a group $\Lambda$ to be of central type if $\Lambda$  admits a $2$-cocycle $\alpha \in Z^{2}(\Lambda, \mathbb{C}^{*})$ such that the twisted group algebra $\mathbb{C}^{\alpha}\Lambda \cong M_{n}(\mathbb{C})$, where $n^{2} = ord(\Lambda)$. Such a cocycle $\alpha$ is called nondegenerate. In this article we slightly extend the terminology and say that a $2$-cocycle $Z^{2}(\Lambda, L^{*})$ is nondegenerate if the twisted group algebra $L^{\alpha}\Lambda$ is an $L$-central simple algebra.
\end{rem}

We can now state our first structure theorem.

\begin{thm} \label{structure theorem 1}

Let $D$ be a division algebra, finite dimensional over its center $K$. Suppose $D$ is faithfully graded by a finite group $G$.  Let $D_{e}$ be the identity component of $D$ and denote by $L$ its center.  Assume $K$ is contained in $D_e$ and that $K$ contains $\mu_{n_{G}}$, $n_{G} = exp(G)$. Then the following conditions hold:

\begin{enumerate}
\item

Conjugation on $D_{e}$ by nonzero homogeneous elements $x_{g} \in D_{g}$ determines a $G$-action  on $L$, the center of $D_e$.  Furthermore, $K = L^{G}$ and hence if we denote by $H$ the kernel of the action, $L/K$ is a $G/H$-Galois extension.

\item

The group $H$ is abelian and is isomorphic to  $A \times A$, for some abelian group $A$.

\item

There exists a $2$-cocycle $\alpha \in Z^{2}(H, L^{*})$ such that $D_{H} = \sum_{h\in H}D_{h} \cong D_{e}\otimes_{L}L^{\alpha}H$. Furthermore, the cocycle $\alpha$ is nondegenerate, that is $L^{\alpha}H$ is an $L$-central simple algebra. The subalgebras  $L^{\alpha}H$ and  $D_{H}$ have center $L$.

\item

The cohomology class $[\alpha] \in H^{2}(H, L^{*})$ is $G/H$-invariant.

\item
The algebra $D$ is a $G/H$-crossed product. Its degree is $d\sqrt{ |H|}[G:H]$, where $d=\sqrt{dim_{L}(D_{e})}$. 

\end{enumerate}

\end{thm}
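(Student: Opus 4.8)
The plan is to read off all five statements from a single subalgebra, the centralizer $C=C_D(D_e)$ of the identity component, together with Skolem--Noether and the double centralizer theorem. For part (1): since $D$ is a division algebra, every nonzero homogeneous element is invertible, so for $x_g\in D_g\setminus\{0\}$ the map $a\mapsto x_gax_g^{-1}$ is a $K$-automorphism of $D$ which carries $D_e$ onto $D_e$ (because $D_gD_eD_{g^{-1}}\subseteq D_e$), hence carries $L=Z(D_e)$ onto itself. Two choices of $x_g$ differ by a factor in $D_e^{*}$, which acts trivially on $L$, so this yields a well-defined $G$-action on $L$. As $K\subseteq Z(D)\subseteq Z(D_e)=L$ and the homogeneous elements span $D$, an element of $L$ fixed by all these conjugations is central in $D$, so $L^{G}=K$; by Artin's theorem $L/K$ is then Galois with group $G/H$, where $H$ is the kernel of the action.

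For parts (2) and (3) I would study $C=C_D(D_e)$. For $h\in H$, conjugation by $x_h$ is an $L$-algebra automorphism of the finite-dimensional $L$-central simple algebra $D_e$, hence inner by Skolem--Noether: $x_h=d_hu_h$ with $d_h\in D_e^{*}$ and $u_h:=d_h^{-1}x_h\in C\cap D_h$. Thus $C\cap D_h\neq 0$ for every $h\in H$, while $C\cap D_g=0$ for $g\notin H$ (conjugation by such an element would fix $L$ pointwise), and $C\cap D_e=Z(D_e)=L$; so $C$ is faithfully $H$-graded. Since $D$ is a division ring so is $C$, and as $C$ centralizes $D_e\supseteq L$ the field $L$ is central in $C$; comparing degrees, for $0\neq c,c'\in C\cap D_h$ one has $c^{-1}c'\in C\cap D_e=L$, so $\dim_L(C\cap D_h)=1$. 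Fixing $u_h\in C\cap D_h\setminus\{0\}$ (with $u_e=1$) we get an $L$-basis of $C$ with $u_{h_1}u_{h_2}=\alpha(h_1,h_2)u_{h_1h_2}$, $\alpha(h_1,h_2)\in L^{*}$, and with $L$ central; hence $C\cong L^{\alpha}H$, a twisted group algebra with trivial $H$-action. By the double centralizer theorem $C$ is simple with center $L$, so $\alpha$ is nondegenerate; Proposition \ref{abelian group twisted group algebra proposition} then forces $H$ to be abelian, and for abelian $H$ the commutator pairing $\beta(h_1,h_2)=\alpha(h_1,h_2)\alpha(h_2,h_1)^{-1}$ is a nondegenerate alternating bicharacter $H\times H\to L^{*}$ (nondegeneracy being equivalent to $L^{\alpha}H$ being $L$-central simple, cf. Remark \ref{central type remark}), which is well known to force $H\cong A\times A$ for some abelian group $A$. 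This gives (2). For the remaining assertions of (3): $D_e$ and $C$ commute elementwise and share the common subfield $L$, so there is an algebra map $D_e\otimes_L C\to D$, $a\otimes c\mapsto ac$; since $D_e$ and $C$ are both $L$-central simple, the source is $L$-central simple and the map is injective. For $0\neq x_h\in D_h$ the division property gives $D_h=D_ex_h$, hence $\dim_L D_h=\dim_L D_e=d^{2}$; taking $x_h=u_h$ shows the image is $\bigoplus_{h\in H}D_eu_h=D_H$, so $D_H\cong D_e\otimes_L L^{\alpha}H$, of center $L$.

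For (4): conjugation by $x_g$ preserves $C$ (it centralizes $x_gD_ex_g^{-1}=D_e$) and sends $C\cap D_h$ to $C\cap D_{ghg^{-1}}$, so $x_gu_hx_g^{-1}=\lambda_{g,h}u_{ghg^{-1}}$ with $\lambda_{g,h}\in L^{*}$; applying this to $u_{h_1}u_{h_2}=\alpha(h_1,h_2)u_{h_1h_2}$, using that the $u$'s commute with $L$ and that $x_g\ell x_g^{-1}={}^{g}\ell$, gives the relation $\lambda_{g,h_1}\lambda_{g,h_2}\,\alpha(gh_1g^{-1},gh_2g^{-1})={}^{g}\alpha(h_1,h_2)\,\lambda_{g,h_1h_2}$, which says exactly that ${}^{g}[\alpha]=[\alpha]$ in $H^{2}(H,L^{*})$. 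Finally (5): in a division algebra every homogeneous component $D_{gh}$ contains an invertible element, so the $G/H$-grading $D=\bigoplus_{gH\in G/H}D_{gH}$ with $D_{gH}=\bigoplus_{h\in H}D_{gh}$ and identity component $D_H$ exhibits $D$ as a $G/H$-crossed product; and $\dim_L D=\sum_{g\in G}\dim_L D_g=|G|d^{2}$ together with $[L:K]=[G:H]$ gives $\dim_K D=|G|[G:H]d^{2}=|H|[G:H]^{2}d^{2}$, whence $\deg D=d\sqrt{|H|}\,[G:H]$.

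The main obstacle is the second step: identifying $C_D(D_e)$ as a twisted group algebra and controlling when it is $L$-central simple. Once the group-theoretic inputs are in place --- namely Proposition \ref{abelian group twisted group algebra proposition} for the abelianness of $H$ and the symplectic normal form $A\times A$ for a finite abelian group carrying a nondegenerate alternating pairing --- everything else is organized bookkeeping with Skolem--Noether, the double centralizer theorem, and dimension counts. One should also be a little careful that $D_e$ is genuinely a $K$-subalgebra of the central simple $K$-algebra $D$ (this uses the hypothesis $K\subseteq D_e$) before invoking the double centralizer theorem.
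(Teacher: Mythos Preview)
Your proof is correct and follows essentially the same arc as the paper's: find, for each $h\in H$, a homogeneous element $u_h$ centralizing $D_e$ via Skolem--Noether, read off the twisted group algebra $L^{\alpha}H$, show $\alpha$ is nondegenerate, invoke Proposition~\ref{abelian group twisted group algebra proposition}, and then do the cocycle and degree bookkeeping for (4) and (5). Your framing via the centralizer $C=C_D(D_e)$ is a clean way to package the paper's construction of the $u_h$'s.

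The one genuine difference is how nondegeneracy (i.e.\ $Z(L^{\alpha}H)=L$) is obtained. You invoke the double centralizer theorem: since $K\subseteq D_e$ and $D_e$ is simple, $C_D(D_e)$ is simple with center $Z(D_e)=L$. The paper instead argues Galois-theoretically: if $L_1=Z(D_H)$, then $G$ acts on $L_1$ with fixed field $K$ and kernel containing $H$, so $[L_1:K]\le [G:H]=[L:K]$ forces $L_1=L$. Your route is shorter here and makes transparent why the hypothesis $K\subseteq D_e$ matters; the paper's route, however, is the one that survives in Theorem~\ref{general structural theorem}, where $K$ need not lie in $D_e$ and the double centralizer theorem over $K$ is not directly available. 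So each approach buys something: yours is cleaner for Theorem~\ref{structure theorem 1}, theirs is the template that extends.
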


Our second structure theorem deals with the general case; we drop the assumption that $K$, the center of $D$,  is contained in the identity component and do not assume  $K$ is $G$-graded.

\begin{thm}\label{general structural theorem}

Let $D$ be a division algebra, finite dimensional over its center $K$. Suppose $D$ is faithfully graded by a finite group $G$. Let $D_{e}$ be the identity component of $D$ and denote by $L$ its center. Let $K_{0} = K \cap L$ denote the central $e$-homogeneous elements of $D$. Assume $K_{0}$ contains $\mu_{n_{G}}$, $n_{G} = exp(G)$. With these conditions the following hold:

\begin{enumerate}
\item

Conjugation on $D_{e}$ with nonzero homogeneous elements $x_{g} \in D_{g}$ determines a $G$-action on $L$ with $K_{0} = L^{G}$. If we denote by $H$ the kernel of the action, then $L/K_{0}$ is a $G/H$-Galois extension.



\item

The algebra $D$ is finite dimensional over  $K_{0}$.

\item

The group $H$ is abelian.

\item

There exists a $2$-cocycle $\alpha \in Z^{2}(H, L^{*})$ such that $D_{H} = \sum_{h\in H}D_{h} \cong D_{e}\otimes_{L}L^{\alpha}H$. Hence, if we denote $L_{1} = Z(L^{\alpha}H)$, then $L_1$ is also the center of the division algebra $D_{H}$.

\item

Conjugation on $D_{H}$ by nonzero homogeneous elements induces an action of $G$ on $L_{1}$ with fixed field $K$, the center of $D$. Furthermore, the extension $L_{1}/K$ is $G/H$-Galois.

\item

The cohomology class $[\alpha] \in H^{2}(H, L^{*})$ is $G/H$ invariant.  If we let $S=\{s\in H:x_s\in L_1\}\ \ (=\{s\in H: \alpha(s,h)=\alpha(h,s) \ \ \hbox{for all} \ \ h\in H\})$,  then $S$ is a subgroup of $H$ and  $L_1=L^\alpha S$.  The group $H/S$ is of central type, hence isomorphic to $A\times A$ for some abelian group $A$.

\item
The division algebra $D$ is a $G/H$-crossed product over the algebra $D_{H}$.  Its degree is $d\sqrt{[H:S]}[G:H]$, where $d = \sqrt{dim_{L}(D_{e})}$.  

\end{enumerate}

\end{thm}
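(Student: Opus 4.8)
The plan is to reduce the general case to the situation already understood in Theorem \ref{structure theorem 1} by passing from $D$ to the subalgebra $D_H$, which will play the role that $D_e$ plays in the first theorem. First I would establish (1): conjugation by a nonzero $x_g\in D_g$ sends $D_e$ to $D_e$ (since $D_e$ is the unique maximal graded division subalgebra of its kind — more concretely, $x_g D_e x_g^{-1}\subseteq D_e$ because the grading is a group grading and $x_g$ is homogeneous of degree $g$, $x_g^{-1}$ of degree $g^{-1}$), hence restricts to an automorphism of the center $L$; this is well defined modulo inner automorphisms of $D_e$, which act trivially on $L$, so we get a genuine $G$-action on $L$. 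That $L^G=K_0$ follows because an element of $L$ centralizes all of $D$ iff it is fixed by every $x_g$, and $L\cap K=K_0$ by definition; then $L/K_0$ is $G/H$-Galois by Artin's theorem, where $H$ is by definition the kernel.

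Next, for (3) and (4): the key point is that $D_H=\bigoplus_{h\in H}D_h$ is an $H$-graded division algebra whose $e$-component is $D_e$ and on which the induced $H$-action on $L$ is, by definition of $H$, trivial. This is precisely the "twisted group algebra over $D_e$" situation: choosing $x_h\in D_h^\times$ for each $h$, one has $x_h y x_h^{-1}=y$ for $y\in L$, and the $x_h$ multiply according to a $2$-cocycle $\alpha\in Z^2(H,L^\times)$ (the action of $H$ on $L^\times$ being trivial), giving $D_H\cong D_e\otimes_L L^\alpha H$ as $L$-algebras; here I would invoke Proposition \ref{abelian group twisted group algebra proposition} to conclude $H$ must be abelian, since $D_H$ being a division algebra forces $L^\alpha H$ to be a division algebra over a field containing $\mu_{n_H}$. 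The center $L_1$ of $L^\alpha H$ is then also the center of $D_H$ because $D_e$ is central over $L$. For finite-dimensionality over $K_0$ (item (2)): $D_e$ is finite-dimensional over $L$ (degree $d$), $L$ is finite over $K_0$ (Galois with group $G/H$, which is finite), so $D_H$ is finite over $K_0$; and $D$ is a $G/H$-crossed product over $D_H$ (every coset of $H$ in $G$ contains a homogeneous unit), with $[G:H]$ finite, so $\dim_{K_0}D=[G:H]\cdot\dim_{K_0}D_H<\infty$. This simultaneously sets up the crossed-product statement in (7).

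For (5): conjugation by homogeneous $x_g$ preserves $D_H$ (as $H\trianglelefteq G$, $x_g D_h x_g^{-1}\subseteq D_{ghg^{-1}}\subseteq D_H$), hence acts on its center $L_1$; an element of $L_1$ is central in $D$ iff fixed by all these automorphisms, giving $L_1^G=K$, and the kernel of this action contains $H$ (indeed equals $H$, since elements of $H$ act by inner automorphisms on $D_H$), so $L_1/K$ is $G/H$-Galois. For (6): the $G/H$-invariance of $[\alpha]\in H^2(H,L^\times)$ is the statement that conjugating the cocycle relations by $x_g$ and using that $g$ acts on $L$ gives a cohomologous cocycle — this is the standard Teichmüller-type argument (cf. \cite{Teichmuller},\cite{Tignol}). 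The description $S=\{s\in H:\alpha(s,h)=\alpha(h,s)\ \forall h\}$ as the set of $h$ with $x_h\in L_1$ is a direct computation in the twisted group algebra, $S$ is visibly a subgroup, $L^\alpha S$ is contained in the center and equals it by a dimension count, and $H/S$ is of central type because the induced (nondegenerate) cocycle on $H/S$ makes $L^{\bar\alpha}(H/S)$ central simple over $L_1$, whence $H/S\cong A\times A$ by Remark \ref{central type remark}. Finally the degree formula in (7) is assembled: $\dim_K D=[G:H]^2\cdot\dim_K D_H$, $\dim_K D_H=[L_1:K]\cdot\dim_{L_1}D_H$, $[L_1:K]=[G:H]$, $\dim_{L_1}D_H=d^2[L:L_1][H:S]/\,$(something) — more cleanly, $\dim_L D_H=d^2|H|$, $[L:L_1]=|S|$, $\dim_{L_1}D_H=d^2|H|\cdot|S|$... so I would organize this as $\deg D=\deg(D_H\text{ as }L_1\text{-algebra})\cdot[G:H]=d\sqrt{[H:S]}\,|S|\cdots$; the correct bookkeeping yields $\deg D=d\sqrt{[H:S]}[G:H]$.

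**Main obstacle.** The genuinely nontrivial step is the abelianness of $H$ (item (3)), which rests on Proposition \ref{abelian group twisted group algebra proposition}: one must show that a twisted group algebra $L^\alpha H$ over a field $L$ containing $\mu_{n_H}$ that happens to be a division algebra forces $H$ abelian. Everything else is either a definitional unwinding (the $G$-actions and their fixed fields), a standard crossed-product/Teichmüller cohomological computation (invariance of $[\alpha]$, the crossed-product structure over $D_H$), or dimension bookkeeping; the conceptual content is concentrated in controlling the twisted group algebra $D_H/D_e$ and identifying its center via the symmetric-cocycle subgroup $S$.
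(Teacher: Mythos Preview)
Your overall architecture matches the paper's, and most of the steps (the $G$-action on $L$, the invocation of Proposition~\ref{abelian group twisted group algebra proposition} for abelianness of $H$, the invariance of $[\alpha]$, the identification of $L_1$ with $L^{\alpha}S$, the crossed-product and degree count) are the same in spirit. But there is one genuine gap, and it sits exactly at the point where Theorem~\ref{general structural theorem} differs from Theorem~\ref{structure theorem 1}.

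You assert ``$D_e$ is finite-dimensional over $L$ (degree $d$)'' and then use this freely to get item~(2) and, implicitly, to obtain the tensor decomposition $D_H\cong D_e\otimes_L L^{\alpha}H$ with $\alpha$ taking values in $L^{*}$. But finite-dimensionality of $D_e$ over $L$ is \emph{not} given: the hypotheses only say $D$ is finite over its center $K$, and $K$ need not lie in $D_e$ (see Remark~\ref{finite dimensional remark}). Without $\dim_L D_e<\infty$ you cannot apply Skolem--Noether, and without Skolem--Noether your chosen $x_h$ only centralize $L$, not all of $D_e$; then $x_{h_1}x_{h_2}x_{h_1h_2}^{-1}$ lies a priori only in $D_e^{*}$, not in $L^{*}$, so you do not yet have a cocycle $\alpha\in Z^2(H,L^{*})$ nor the tensor splitting $D_e\otimes_L L^{\alpha}H$. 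This is precisely the obstacle the paper isolates (``In order to apply Skolem--Noether we need to know $D_e$ is finite dimensional over its center $L$'').

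The paper breaks the circularity by a bootstrap through the auxiliary subgroup $\tilde{H}=\{g\in G:\text{conjugation by }u_g\text{ on }D_e\text{ is inner}\}$. For $h\in\tilde{H}$ one can, by definition (no Skolem--Noether needed), choose $x_h$ centralizing $D_e$, so $D_{\tilde{H}}=D_e\otimes_L L^{\alpha}\tilde{H}$ with $Z(D_{\tilde{H}})=L_1$ a finite extension of $L$. One then shows directly that $K\subseteq D_{\tilde{H}}$ (each homogeneous component of a central element must commute with all of $D_e$, forcing its degree into $\tilde{H}$), hence $K\subseteq L_1$; since $D_{\tilde{H}}$ is finite over $K$ and $L_1$ is finite over $L$, one gets $D_{\tilde{H}}$, and therefore $D_e$, finite over $L$. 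Only then does Skolem--Noether apply to show $\tilde{H}=H$, and the rest of your outline goes through. You should insert this argument (or, alternatively, cite Kaplansky's theorem on primitive PI-rings, since $D_e$ is a division subring of the PI-ring $D$) before claiming the tensor decomposition and before item~(2); as written, the proof is circular at its central step.
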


In the next theorem we determine  under what circumstances the center $K$ (in Theorem~\ref{general structural theorem} above) is $G$-graded. By statement (5) of the theorem,  $K$ is a subfield of  $D_{H}$ and hence if $K$  is $G$-graded, it is necessarily $H$-graded.

We continue the notation as in Theorem~\ref{general structural theorem}. For every $h \in H$ we let  $x_{h}$ be a representative in $L^{\alpha}H$  (i.e.,  any nonzero element in $(L^{\alpha}H)_{h}$). Let $S = \{h \in H: x_{h}x_{h'} = x_{h'}x_{h}\ \   $for all$\ \  h' \in H \}$, that is, $S$ is the set of elements in $H$ such that $x_s\in L_1$, the center of $L^\alpha H$.   We {\it claim} that in fact $L_1=L^\alpha S$:  It is clear that  $L^\alpha S\subseteq L_1$.  Conversely if $z=\sum_{h\in H}l_hx_h$ lies in $L_1$,  then for all $r\in H$, $x_rz=zx_r$.  But $H$ is abelian and $x_r$ commutes with the elements of $L$.  It follows that if $l_h\not=0$  then $h\in S$.

Note that because $G$ acts on $L_{1}$, it acts also on $S$,  so $S$ is a normal subgroup of $G$.

\begin{thm} \label{graded center}
The center $K$ is $G$ graded if and only if $S$ is central in $G$.
Moreover if $S$ is central in $G$ then $K$ is a twisted group algebra $K_0^{\tilde \alpha} S$,  where $\tilde\alpha: S\times S\rightarrow K_{0}^{*}$ is a two-cocycle cohomologous to $\alpha$ in $H^2(S,L^{*})$.
\end{thm}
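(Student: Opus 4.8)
The plan is to work entirely inside the field $L_{1}=L^{\alpha}S=\bigoplus_{s\in S}Lx_{s}$. By statement (5) of Theorem~\ref{general structural theorem} (and the discussion preceding it), $K$ is a subfield of $L_{1}$, the conjugation action of $G$ on $L_{1}$ has fixed field $K=L_{1}^{G}$, and $L_{1}=\bigoplus_{s\in S}Lx_{s}$ is the homogeneous decomposition of $L_{1}$ as a graded subalgebra of $D_{H}$; hence $K$ is $G$-graded if and only if $K=\bigoplus_{s\in S}(K\cap Lx_{s})$. The fact that drives the whole argument is that conjugation by a nonzero homogeneous element $x_{g}\in D_{g}$ is an automorphism of $D_{H}$ which shifts degrees by $s\mapsto gsg^{-1}$ and stabilizes $L_{1}=Z(D_{H})$, so it carries $Lx_{s}$ \emph{onto} $Lx_{gsg^{-1}}$. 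For the "if" direction I would argue: if $S\subseteq Z(G)$, then conjugation by each $x_{g}$ stabilizes every summand $Lx_{s}$, so the decomposition $L_{1}=\bigoplus_{s\in S}Lx_{s}$ is $G$-stable term by term; taking $G$-invariants summand by summand gives $K=L_{1}^{G}=\bigoplus_{s\in S}(Lx_{s})^{G}=\bigoplus_{s\in S}(K\cap Lx_{s})$, so $K$ is $G$-graded.

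For the converse, assume $K$ is $G$-graded, so $K=\bigoplus_{s\in S'}(K\cap Lx_{s})$ with $S'=\{s\in S:K\cap Lx_{s}\neq 0\}$. Since $K$ is a field, $S'$ is a subgroup of $S$ (inverses of homogeneous elements of a graded field are homogeneous, and products of nonzero homogeneous elements are nonzero) and each nonzero component $K\cap Lx_{s}$ is one-dimensional over $K\cap L=K_{0}$, so $[K:K_{0}]=|S'|$. Independently, from the towers $K_{0}\subseteq L\subseteq L_{1}$ and $K_{0}\subseteq K\subseteq L_{1}$, using $[L:K_{0}]=[G:H]$ (statement (1)), $[L_{1}:K]=[G:H]$ (statement (5)) and $[L_{1}:L]=|S|$, one gets $[L_{1}:K_{0}]=|S|\,[G:H]=[G:H]\,[K:K_{0}]$, hence $[K:K_{0}]=|S|$ and therefore $S'=S$. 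Now fix $s\in S$ and a nonzero $k_{s}\in K\cap Lx_{s}$. As $k_{s}$ is central in $D$, $x_{g}k_{s}x_{g}^{-1}=k_{s}$ for every $g\in G$; but $x_{g}k_{s}x_{g}^{-1}\in D_{gsg^{-1}}$ while $k_{s}\in D_{s}$, and $D_{gsg^{-1}}\cap D_{s}=0$ unless $gsg^{-1}=s$. Since $k_{s}\neq 0$ this forces $gsg^{-1}=s$ for all $g$, i.e. $S\subseteq Z(G)$.

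For the "moreover" statement, with $S$ central I would use the decomposition $K=\bigoplus_{s\in S}(K\cap Lx_{s})$ just obtained, writing $K\cap Lx_{s}=K_{0}\tilde x_{s}$ with $\tilde x_{s}=\lambda_{s}x_{s}$, $\lambda_{s}\in L^{*}$, $\tilde x_{e}=1$. For $s,t\in S$ the product $\tilde x_{s}\tilde x_{t}$ lies in the one-dimensional $K_{0}$-space $K\cap Lx_{st}=K_{0}\tilde x_{st}$, so $\tilde x_{s}\tilde x_{t}=\tilde\alpha(s,t)\tilde x_{st}$ with $\tilde\alpha(s,t)\in K_{0}^{*}$; associativity makes $\tilde\alpha$ a $2$-cocycle and exhibits $K=K_{0}^{\tilde\alpha}S$. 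Comparing with $x_{s}x_{t}=\alpha(s,t)x_{st}$ gives $\tilde\alpha(s,t)=\lambda_{s}\lambda_{t}\lambda_{st}^{-1}\alpha(s,t)$, so $\tilde\alpha$ differs from $\alpha|_{S\times S}$ by the coboundary of $\lambda\colon S\to L^{*}$, whence $[\tilde\alpha]=[\alpha]$ in $H^{2}(S,L^{*})$.

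I expect the main obstacle to be twofold: making precise that conjugation by $x_{g}$ maps $Lx_{s}$ \emph{onto} $Lx_{gsg^{-1}}$ (so that a central element lying in $Lx_{s}$ is genuinely pinned to a single degree and cannot be moved), and establishing $S'=S$, which is not formal and requires the degree computation $[K:K_{0}]=|S|$ extracted from the two Galois extensions in Theorem~\ref{general structural theorem}. The remaining steps — the summand-by-summand invariants in the "if" direction and the cocycle comparison in the "moreover" — are routine.
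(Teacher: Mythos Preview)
Your proof is correct, and for the ``if'' direction it coincides with the paper's argument. For the converse and the ``moreover'' clause, however, you take a genuinely different and more economical route.

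The paper proves the converse by contraposition: assuming $S$ is not central, it picks $s_{0}\in S\setminus Z(G)$, forms the trace-like element $z=\sum_{g\in G}y_{g}x_{s_{0}}y_{g}^{-1}\in K$, and then invokes Hilbert~90 (applied to the $1$-cocycle $g\mapsto y_{g}x_{s_{0}}y_{g}^{-1}x_{s_{0}}^{-1}$ on $C_{G}(s_{0})/H$) to rescale $x_{s_{0}}$ so that the $s_{0}$-component of $z$ is $|C_{G}(s_{0})|\,t^{-1}x_{s_{0}}\neq 0$, exhibiting a non-homogeneous central element. The ``moreover'' clause is then obtained by rerunning the same Hilbert~90 argument with $C_{G}(s)=G$ to produce, for each $s\in S$, a representative $\tilde x_{s}\in K$. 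Your argument replaces all of this with the single degree identity $[K:K_{0}]=|S|$, read off from the two $G/H$-Galois extensions $L/K_{0}$ and $L_{1}/K$ of Theorem~\ref{general structural theorem}; once $K\cap Lx_{s}\neq 0$ for every $s\in S$, centrality of any nonzero $k_{s}\in K\cap Lx_{s}$ immediately forces $gsg^{-1}=s$, and the same nonvanishing supplies the generators $\tilde x_{s}$ for the ``moreover'' statement without further work. What the paper's approach buys is an explicit non-homogeneous witness in $K$; what yours buys is a shorter, Hilbert~90--free argument. One small point of presentation: the degree computation you carry out in the converse paragraph is also what justifies writing $K\cap Lx_{s}=K_{0}\tilde x_{s}$ (rather than possibly $0$) in your ``moreover'' paragraph, so you should flag that you are reusing it there.
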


The most substantial part of this paper is devoted to proving a converse to the structure theorems. To this end we adopt the following terminology:

Let $D$ be a finite dimensional division algebra with center $K$ and suppose $D$ is faithfully graded by a finite group $G$. Assume the $e$-center $K_{0}$ of $D$ contains a primitive $n_{G}$-th root of unity where $n_{G}$ is the exponent of $G$. By the structure theorems this gives rise to

\begin{enumerate}

\item

a group extension with abelian kernel
$$1 \rightarrow H \rightarrow G \rightarrow G/H \rightarrow 1,$$

\item a cohomology class $[\alpha] \in H^{2}(H, L^{*})^{G/H}$, where $L = Z(D_{e})$,

\noindent and

\item a positive integer $d$, the degree of $D_e$ over its center.

\end{enumerate}

\medskip

Let $M(H)$ denote the Schur multiplier of $H$ and let $\phi_{[\alpha]}\in Hom(M(H), L^{*})$ be the map corresponding to $[\alpha]$ by means of the Universal Coefficients Theorem. By  naturality, the map $\phi_{[\alpha]}$ is $G$-invariant. Furthermore, because the values of $\phi_{[\alpha]}$ are contained in the group $\mu_{n_{H}}$, and these are assumed to be in $K_{0}$, we have $\phi_{[\alpha]}\in Hom(M(H), \mu_{n_{H}})^{G/H}$, where the action of $G$ on $\mu_{n_{H}}$ is trivial. Because $H$ is abelian the Schur multiplier can be identified with the wedge product $H\wedge H$ and if $h_1, h_2\in H$,  then $\phi_{[\alpha]}(h_1\wedge h_2)=\alpha(h_1,h_2)\alpha(h_2,h_1)^{-1}$. In the twisted group algebra $L^\alpha H=\sum_{h\in H}Lx_h$ one sees easily that for all $h_1,h_2\in H$, the commutator $x_{h_1}x_{h_2}x_{h_1}^{-1}x_{h_2}^{-1}$ is a root of unity and we also have $\phi_{[\alpha]}(h_1\wedge h_2)=x_{h_1}x_{h_2}x_{h_1}^{-1}x_{h_2}^{-1}$.
Note that if $F$ is an algebraically closed field containing $K_{0}$ then in fact $H^{2}(H, F^{*})^{G/H}\cong Hom(M(H), \mu_{n_{H}})^{G/H}$, and hence it makes sense to speak of the nondegeneracy of $\phi_{[\alpha]}\in Hom(M(H), \mu_{n_{H}})^{G/H}$ (see Remark \ref{central type remark}). Here, the action of $G$ on $F^{*}$ and on $\mu_{n_{H}}$ is the trivial action.

Letting  $[\beta] \in H^{2}(G/H, H)$ denote the cohomology class determined by the group extension given above, we see that
any finite dimensional division algebra faithfully graded by the group $G$ determines uniquely an ordered triple $([\beta], \phi, d)  \in H^{2}(Q, H) \times Hom(M(H), \mu_{n_{H}})^{Q}\times \mathbb N$ where $Q = G/H$ and $\mathbb N=\{1,2,3,\dots \}$ denotes the set of natural numbers.

\begin{defn} \label{realizable triple definition} Let $Q$ be a finite group and $H$ an abelian group. Let $n_{H} = exp(H)$. We say the triple $([\beta], \phi, d) \in H^{2}(Q, H) \times Hom(M(H), \mu_{n_{H}})^{Q}\times \mathbb N$ is {\it realizable} if there exists a finite dimensional division algebra, faithfully $G$-graded, over a field of characteristic zero containing $\mu_{n_{H}}$, that yields that triple. Here, the group $G$ is the group determined by the class $[\beta]$.

\end{defn}

Here is the main result of the paper. Notation as above.

\begin{thm} \label{Main realizable triple theorem}
If $Q$ is a finite group and $H$ is an abelian group, then every ordered triple  $([\beta], \phi,d) \in H^{2}(Q, H) \times Hom(M(H), \mu_{n_{H}})^{Q}\times \mathbb N$ is realizable. Moreover, it is realizable over a field containing an algebraically closed field of characteristic zero. If $\phi$ is nondegenerate then the center of every realization will lie in the $e$-component of its grading.

\end{thm}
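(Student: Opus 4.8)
The plan is to build the required division algebra $D$ by constructing, in stages, the three layers that the structure theorems predict: first a twisted group algebra over a suitable base, then a crossed product on top of it realizing the extension $[\beta]$. I would begin with the ``inner'' layer. Since $H$ is abelian and $\phi\in\mathrm{Hom}(M(H),\mu_{n_H})^Q$ is given, pick a two-cocycle $\alpha_0\in Z^2(H,\mu_{n_H})$ whose associated commutator pairing is $\phi$ (using $M(H)\cong H\wedge H$ and the Universal Coefficients Theorem as in the discussion preceding the theorem). Let $S\le H$ be the radical of $\phi$, so that $H/S$ carries a nondegenerate pairing and is therefore of central type, $H/S\cong A\times A$. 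Over any field $F_0$ containing $\mu_{n_H}$ one has a twisted group algebra $F_0^{\alpha_0}H$ which is Azumaya/simple-over-its-center with center $F_0^{\alpha_0}S$; I would, however, not work over a field but engineer the center $L$ of the $e$-component to be a field on which $G/H$ acts faithfully with the prescribed extension class. Concretely: start from an algebraically closed field $F$ of characteristic zero, form a $Q$-Galois extension $\widetilde L/F$ (e.g. inside a rational function field, using that every finite group is a Galois group over a suitably large field — this is the classical input also invoked for the crossed-product examples in the excerpt), and realize the chosen extension $1\to H\to G\to Q\to 1$ so that $G$ acts on $\widetilde L$ through $Q$.

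Next I would install the degree-$d$ datum. The $e$-component must be a division algebra $D_e$ of degree $d$ over its center $L$; here $L$ will be an extension of $\widetilde L$ chosen so that there exists a degree-$d$ division algebra over it admitting the needed $G$-semilinear automorphisms (again a generic/function-field construction gives a degree-$d$ division algebra with plenty of room, and one can arrange $L$ to contain an algebraically closed field so that the $e$-center contains $\mu_{n_G}$ as required). Then set $D_H := D_e\otimes_L L^{\alpha}H$ where $\alpha$ is the image of $\alpha_0$ under $\mu_{n_H}\hookrightarrow L^*$; by the central-type analysis this is a division algebra (after possibly enlarging the base field generically to kill zero divisors — this is the standard trick: a twisted group algebra over a large enough field of fractions is a division algebra), with center $L_1=L^{\alpha}S$, and $G$ acts on $L_1$ through $Q$ with fixed field the desired $K$. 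Finally, to pass from $D_H$ to $D$, I would realize the extension class $[\beta]\in H^2(Q,H)$ as a crossed product: choose coset representatives and a factor system $\beta:Q\times Q\to H$, and form $D=\bigoplus_{q\in Q}D_H u_q$ with $u_q x u_q^{-1}$ acting through the fixed $G$-action on $D_H$ and $u_q u_{q'}=\beta(q,q')\,u_{qq'}$ (interpreting $\beta(q,q')\in H$ via the already-chosen homogeneous units $x_h\in D_h\subset D_H$). One checks associativity reduces to the cocycle identity for $\beta$ together with compatibility of the $Q$-action, and that, after a further generic extension of the ground field if necessary to ensure $D$ has no zero divisors, $D$ is a division algebra; unwinding the construction shows the triple it produces is exactly $([\beta],\phi,d)$.

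The main obstacle is ensuring at each stage that the algebra produced is genuinely a \emph{division} algebra and not merely simple, while \emph{simultaneously} keeping the center's $e$-part large (containing an algebraically closed field, in particular $\mu_{n_G}$). These two demands pull in opposite directions — division-algebra-ness typically wants ``generic'' scalars, algebraic-closedness of a subfield wants very non-generic scalars — so the delicate point is the order of operations: one must put down the algebraically closed field $F$ first, build the whole graded skeleton (the $Q$-Galois extension, the degree-$d$ factor, the two-cocycle $\alpha$, the crossed-product structure constants) over a purely transcendental extension $F(\underline t)$ of $F$ with enough independent indeterminates, and only then argue that specialization/genericity forces the total ring to be a division algebra. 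I would handle this by a valuation-theoretic or specialization argument: exhibit a discrete (or iterated) valuation on the center extending to the algebra for which the residue algebra is graded-division, forcing the algebra itself to be division — the standard mechanism by which generic crossed products are shown to be division algebras. The last sentence of the theorem (if $\phi$ is nondegenerate then $S=\{1\}$, so $L_1=L$ and the center $K=L^G\subseteq D_e$) then falls out immediately from statement (5)–(7) of Theorem~\ref{general structural theorem} applied to the constructed $D$, since nondegeneracy of $\phi$ means its radical $S$ is trivial.
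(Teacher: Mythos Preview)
Your layer-by-layer plan differs substantially from the paper's approach, and it has a genuine gap at the division-algebra step.

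Take the simplest case $H=\{1\}$, $d=1$. Then $\phi$ is trivial, $D_H=D_e=L$, and your recipe forms $D=\bigoplus_{q\in Q}L\,u_q$ with $u_qu_{q'}=x_{\beta(q,q')}u_{qq'}$; but $\beta(q,q')\in H=\{1\}$, so the factor set is trivial and $D\cong L\ast Q\cong M_{|Q|}(K_0)$, a matrix algebra. No ``generic extension of the ground field'' repairs this: enlarging $K_0$ or $L$ only makes zero-divisors more likely, not less. What is missing is generic parameters in the \emph{factor set itself}. More generally, your crossed-product factor set $x_{\beta(q,q')}$ is completely determined by the group extension and carries no free parameters, so the valuation/specialization argument you sketch has nothing to specialize. (A secondary issue: ``associativity reduces to the cocycle identity for $\beta$'' is too quick --- the scalars $\gamma(q,h)$ in $u_q x_h u_q^{-1}=\gamma(q,h)x_{qhq^{-1}}$ must be chosen, and their consistency with $\alpha$ and with $x_{\beta(q,q')}$ is a Teichm\"uller-type condition you have not addressed.)

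The paper solves this by an entirely different mechanism. It takes a free presentation $1\to R\to F_G\to G\to 1$, sets $F_H=\pi^{-1}(H)$, and forms $\Gamma_G=F_G/[R,F_H]$. The torsion subgroup of $R/[R,F_H]$ is identified with $U=M(H)$ via the Hopf formula; quotienting by it yields a torsion-free abelian-by-finite group $\Gamma_G/U$ (torsion-freeness by Higman's theorem). The twisted group algebra $k^{\phi\circ\beta}(\Gamma_G/U)$ is then an Ore domain by Moody's theorem, and one localizes. The free abelian piece $T=(R/[R,F_H])/M(H)$ is exactly the supply of generic parameters you were missing: its field of fractions $k(T)$ becomes $L=D_e$, and the $G$-action on $T$ (arranged by a careful choice of presentation so that the kernel is precisely $H$) furnishes the Galois structure $L/K_0$. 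Associativity and all Teichm\"uller-type compatibilities are automatic because everything sits inside a single twisted group algebra of a genuine group. The passage from $d=1$ to arbitrary $d$ is handled afterwards by tensoring with a generic symbol algebra $(a,b)_{L(a,b),d}$ and a Hilbert-twist argument. Your observation about the final sentence (nondegenerate $\phi$ forces $S=\{1\}$, hence $L_1=L$ and $K\subseteq D_e$) is correct.
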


We will say that an extension
$$\beta: 1 \rightarrow H \rightarrow G \rightarrow Q \rightarrow 1$$ with an abelian kernel is realizable if there exists a $Q$-invariant map $\phi: M(H) \rightarrow \mu_{n_{H}}$ such that the triple $([\beta], \phi,1) $ is realizable.  We obtain the following corollary  (see Cuadra and Etingof result (\cite{CuadraEtingof}, Theorem $2.1$)).


\begin{cor} \label{realizable extension corollary}
Every extension
$$\beta: 1 \rightarrow H \rightarrow G \rightarrow Q \rightarrow 1$$ with an abelian kernel is realizable by  a division algebra of degree $[G:H]$.
\end{cor}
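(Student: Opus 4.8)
The plan is to deduce this from Theorem~\ref{Main realizable triple theorem} by choosing the data $(\phi,d)$ in the simplest possible way. Let $[\beta]\in H^{2}(Q,H)$ be the cohomology class of the given extension, let $\mathbf{1}\colon M(H)\to\mu_{n_{H}}$ denote the trivial homomorphism, and set $d=1$. Since the $Q$-action on $\mu_{n_{H}}$ is trivial, $\mathbf{1}$ is automatically $Q$-invariant, so $([\beta],\mathbf{1},1)$ is a genuine element of $H^{2}(Q,H)\times\mathrm{Hom}(M(H),\mu_{n_{H}})^{Q}\times\mathbb{N}$. Applying Theorem~\ref{Main realizable triple theorem} to this triple produces a finite dimensional division algebra $D$, faithfully $G$-graded over a field of characteristic zero containing $\mu_{n_{H}}$ (indeed over one containing an algebraically closed field), that yields $([\beta],\mathbf{1},1)$; by Definition~\ref{realizable triple definition} this is precisely what it means for the extension $\beta$ to be realizable.

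The one remaining point is to check that such a $D$ has degree $[G:H]$. For this I would invoke part (7) of Theorem~\ref{general structural theorem}: the degree of $D$ equals $d\sqrt{[H:S]}\,[G:H]$, where $d=\sqrt{\dim_{L}(D_{e})}$ with $L=Z(D_{e})$, and $S=\{s\in H:\alpha(s,h)=\alpha(h,s)\ \text{for all}\ h\in H\}$ for the associated $2$-cocycle $\alpha\in Z^{2}(H,L^{*})$. Since $d=\sqrt{\dim_{L}(D_{e})}=1$ forces $D_{e}=L$, part (4) of the same theorem gives $D_{H}=L^{\alpha}H$; and since the homomorphism attached to $D$ is $\phi_{[\alpha]}=\mathbf{1}$, while $\phi_{[\alpha]}(h_{1}\wedge h_{2})=\alpha(h_{1},h_{2})\alpha(h_{2},h_{1})^{-1}$, the cocycle $\alpha$ is symmetric. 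As $H$ is abelian this makes all the $x_{h}$ commute with one another (and they already commute with $L$), so $L^{\alpha}H$ is commutative, i.e.\ $S=H$; hence $[H:S]=1$ and $\deg D=[G:H]$. Alternatively one can argue directly: $D_{H}=L^{\alpha}H$ is then a commutative subalgebra of the division algebra $D$, hence a field, which forces $S=H$.

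I do not expect a real obstacle here: the corollary is essentially a formal specialization of Theorem~\ref{Main realizable triple theorem}, the only substantive observation being that the trivial homomorphism is a permissible choice of $\phi$. The single step requiring a little care is the degree bookkeeping — seeing that the ``central-type'' factor $\sqrt{[H:S]}$ degenerates to $1$ precisely because $\phi$ was taken trivial — which rests on the correspondence between $\phi_{[\alpha]}$, the symmetry of $\alpha$, and the subgroup $S$ recorded in Theorem~\ref{general structural theorem} and in the paragraph preceding Theorem~\ref{graded center} (and on the notion of nondegeneracy from Remark~\ref{central type remark}).
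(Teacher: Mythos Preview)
Your proposal is correct and follows essentially the same approach as the paper: choose the trivial $\phi$ and $d=1$, apply Theorem~\ref{Main realizable triple theorem}, and observe that triviality of $\phi$ forces $D_{H}=L^{\alpha}H$ to be commutative (hence a field), so that $D$ is a classical $Q$-crossed product of degree $[G:H]$. Your use of the degree formula from Theorem~\ref{general structural theorem}(7) and the identification $S=H$ is just a slightly more bookkeeping-oriented way of reaching the same conclusion the paper states directly.
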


\begin{proof} If $\phi$ is trivial then $\phi$ is $Q$-invariant and so by the theorem the triple $([\beta],[\phi],1)$ is realizable by a division algebra $D$.  Moreover the $e$-component of $D$ is a field $L$ and, because $\phi$ is trivial, the $H$ component is a field extension $L_1$ of $L$.  The group $Q$ acts faithfully on $L_1$ and the fixed field is the center $K$ of $D$.  It follows that $D$ is a (classical) crossed-product algebra of degree $Q$. \end{proof}





\begin{example} \label{examples quaternion and dihedral}
In \cite{CuadraEtingof} Cuadra and Etingof give an example of a finite dimensional division algebra  graded by $Q_{8}$, the quaternion group of order $8$. In their example the center is not graded. In the following we describe the possible grading structure on division algebras over fields which contain a primitive $4$th root of unity, by the group $Q_{8}$ and by the group $D_4$, the dihedral group of order $8$.  As above $D$ will denote the division algebra,  $K$ its center and $K_0$ its $e$-center.   We will assume    $D_{e} = L$,  so $d=1$ $($See the remarks at the beginning of section 3$)$ and  $L_1$  denotes the center of the $H$-component.

Let $$1 \rightarrow H \rightarrow Q_{8} \rightarrow Q_{8}/H \rightarrow 1$$

\begin{enumerate}

\item
If $H = \{1\}$ we obtain a $Q_{8}-$crossed product division algebra of degree $8$.
\item

If $H \cong \mathbb{Z}_{2}$,  the $e$-component $L$ of $D$  is a $\mathbb{Z}_2\times \mathbb{Z}_2$ Galois extension of the $e$-center $K_{0}$.   The $H$-component $D_{H}$ is a field, so equal to $L_{1}$ and $K = L_1^{G/H}=K_0^{\alpha}H$ is graded, a quadratic extension of $K_0$. The division algebra $D$ is a $\mathbb{Z}_2\times \mathbb{Z}_2$-crossed product for the extension $L_1/K$.
\item

If $H \cong \mathbb{Z}_{4}$, the group $S$ of Theorem \ref{general structural theorem} equals $H$ $($because $H$ is cyclic$)$.  The $e$-component $L$ is an extension of degree 2 over  $K_0$ and the $H$-component is a field extension $($so equal to $L_1$$)$ of $L$ of degree 4.   The center $K= L_1^{G/H}$ is ungraded,  an extension of degree 4 over $K_0$.  The division algebra $D$ is  a quaternion algebra  over  $K$.  This is the example of Cuadra and Etingof.

\bigskip

\end{enumerate}

Now let $$1 \rightarrow H \rightarrow D_4 \rightarrow D_4/H \rightarrow 1$$

\begin{enumerate}

\item

If $H = \{1\}$ we obtain a $D_4$ crossed product division algebra of degree $8$.
\item

If $H \cong \mathbb{Z}_{2}$ the $e$-component  $L$ of $D$ is a $\mathbb{Z}_2\times \mathbb{Z}_2$ Galois extension of the $e$-center $K_0$.   The $H$-component $D_H$ is a field, so equal to  $L_{1}$ and $K = L_1^{G/H}=
K_0^{\alpha}H$ is graded, a quadratic extension of $K_0$.     The division algebra $D$ is a $\mathbb{Z}_2\times \mathbb{Z}_2$-crossed product for the extension $L_1/K$.

\item
If $H \cong \mathbb{Z}_{2} \times \mathbb{Z}_{2}$  and the map $\phi$ is trivial, then  the $e$-component  $L$ is an extension of degree 2 over $K_0$ and the $H$-component is a field (so equal to $L_1$), a biquadratic extension of $L$.    The division algebra $D$  is a  quaternion algebra  over the  nongraded center $K=L_1^{G/H}$ which is of degree 4 over $K_0$. Because the sequence is split,  $D$ is isomorphic to $D_0\otimes_{K_0}K$,  where $D_0$ is a quaternion algebra over $K_0$: Indeed, if $D_{4} = \{\sigma, \tau: \sigma^{4} = \tau^{2} = e, \sigma^{-1} \tau = \tau \sigma\}$ and $H = \{e, \sigma^{2}, \tau, \sigma^{2}\tau\}$, the element $\sigma\tau$ is of order $2$ and acts nontrivially on $L$. Thus $L$ and $\sigma\tau$ generate a quaternion subalgebra $D_0$ over $K_0$ and $D\cong D_0\otimes_{K_0}K$.

\item

If $H \cong \mathbb{Z}_{2} \times \mathbb{Z}_{2}$ and the map $\phi$ is nontrivial,  then $\phi$ is the only nontrivial element of  $Hom(M(H), \mu_{2})$  and so   $\phi$ is  $G$-invariant.   The $e$-component $L$   has degree 2 over $K_0$.  The $H$-component of $D$ is a quaternion algebra with center $L$.  Therefore $L_1=L$ and $K=K_0$.  Because the sequence is split,  $D\cong Q_1\otimes_K Q_2$ is a tensor product of two quaternion algebras over the center $K$. Indeed, if $D_{4} = \{\sigma, \tau: \sigma^{4} = \tau^{2} = e, \sigma^{-1} \tau = \tau \sigma\}$ and $H = \{e, \sigma^{2}, \tau, \sigma^{2}\tau\}$, the element $\sigma\tau$ is of order $2$ and acts nontrivially on $L$. Thus it generates a quaternion subalgebra $Q_{1}$ over $K$ which can be factored from $D$.

\item
If $H \cong \mathbb{Z}_{4}$, the group $S$ of Theorem \ref{general structural theorem} equals $H$ $($because $H$ is cyclic$)$.  The $e$-component   $L$ is  of degree 2 over $K_0$ and the $H$-component is a field extension of $L$ of degree 4.  So $L_1$ equals the $H$-component.    The center $K=L_1^{G/H}$ is ungraded,  an extension of degree 4 over $K_0$.  The division algebra $D$ is  a quaternion algebra  over  $K$ and because the extension is necessarily split, as in example (3)  there is a quaternion algebra $D_0$ over $K_0$ such that $D\cong D_0\otimes_{K_0}K$.

\bigskip

\end{enumerate}



\end{example}

Next we address the following problem: What are the $G$-graded simple algebras over an algebraically closed field of characteristic zero that admit a $G$-graded form division algebra?

Let $D$ be a finite dimensional division algebra faithfully $G$-graded with center $K$. Let $K_{0} = K \cap D_{e}$ as above. Suppose $K_{0}$ contains $\mu_{n_{G}}$, $n_{G} = exp(G)$. Extension of scalars $D_{F} = D\otimes_{K_{0}}F$ where $F$ is any algebraically closed field containing $K_{0}$ yields a finite dimensional $G$-graded simple algebra in which $F$ will be the field of central homogeneous elements of degree $e$. Such algebras have been classified by means of elementary and fine gradings by Bahturin, Sehgal and Zaicev \cite{BSZ}. Here is their result:

\begin{thm} \label{Bahturin Sehgal Zaicev theorem}
Let $A$ be a finite dimensional $G-$simple algebra over an algebraically closed field
$F$ of characteristic zero.  There is a subgroup $H$ of $G$, a two cocycle $\alpha\in Z^{2}(H,F^{*})$
$($the $H-$action on $F^{*}$ is trivial$)$ and $\mathfrak{g}=(g_{1},\ldots,g_{s}) \in G ^{(s)}$,
such that $A\cong F^{\alpha}H\otimes M_{s}(F)$, where $($by this identification$)$ the $G-$grading on $A$ is given
by
\[
A_{g}=\mbox{span}_{F}\left\{ u_{h}\otimes e_{i,j}:g=g_{i}^{-1}hg_{j}\right\} .
\]
\end{thm}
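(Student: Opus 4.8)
The plan is to establish Theorem~\ref{Bahturin Sehgal Zaicev theorem} by analyzing the structure of a finite dimensional $G$-simple algebra $A$ over an algebraically closed field $F$ of characteristic zero via its homogeneous components and idempotents. First I would recall that, because $A$ is $G$-simple and finite dimensional, the support $\mathrm{Supp}(A)=\{g\in G: A_g\neq 0\}$ need not be all of $G$; but replacing $G$ by the subgroup it generates is harmless, so I may assume the grading is as concrete as possible. The key first step is to understand $A_e$: one shows that $A_e$ is a semisimple algebra (in characteristic zero, there are no nilpotent graded ideals, and a standard argument shows the Jacobson radical is graded, hence zero), and in fact that $A_e$ is a direct product of matrix algebras over $F$ permuted transitively (up to isomorphism) by conjugation with invertible homogeneous elements — this transitivity is exactly where $G$-simplicity is used. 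From this one extracts a complete set of orthogonal primitive idempotents $e_{1},\dots,e_{s}$ in $A_e$ whose sum is $1$, and these give the Peirce decomposition $A=\bigoplus_{i,j}e_iAe_j$ that will become the matrix-unit grid $M_s(F)$.

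Second, I would fix a ``base'' idempotent, say $e_1$, and set $B=e_1Ae_1$. The next step is to identify $B$ as a graded subalgebra: its $e$-component is $e_1A_ee_1\cong F$ (a single matrix block cut down to a primitive idempotent gives the ground field since $F$ is algebraically closed), and its support is a subgroup $H\leq G$. One then proves $B$ is a \emph{graded division algebra} — every nonzero homogeneous component $B_h$ is one-dimensional over $F$ and spanned by an invertible element $u_h$ — which forces $B\cong F^{\alpha}H$ for the two-cocycle $\alpha(h,h')$ determined by $u_hu_{h'}=\alpha(h,h')u_{hh'}$, with $H$ acting trivially on $F^{*}$ because $F$ is central. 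This is essentially the classification of finite dimensional graded-division algebras over an algebraically closed field and is the content of ``case (2)'' in the introduction; I would cite or reprove the fact that such an algebra is a twisted group algebra $F^{\alpha}H$ with $H$ necessarily the support.

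Third, I would reconstruct the full algebra from $B$ and the matrix grid. Since $e_1,\dots,e_s$ are conjugate in $A$ by invertible homogeneous elements, choose invertible homogeneous $v_i\in A$ with $v_i e_1 v_i^{-1}=e_i$, say $v_i$ of degree $g_i$ (take $v_1=1$, $g_1=e$); then the elements $e_{i,j}:=v_i u_e v_j^{-1}$-type products, together with the $u_h$, generate $A$, and one checks $A=B\otimes_F\mathrm{span}_F\{e_{i,j}\}\cong F^{\alpha}H\otimes M_s(F)$ as $F$-algebras, with the homogeneous piece $u_h\otimes e_{i,j}$ sitting in degree $g_i^{-1}hg_j$ (the exponents/inverses being a matter of convention, matching the statement). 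A dimension count — $\dim_F A=|H|\cdot s^2$ on both sides — confirms that nothing is lost. I expect the main obstacle to be the second step: proving rigorously that the corner algebra $e_1Ae_1$ is a graded division algebra, i.e.\ that each homogeneous component has dimension exactly one over $F$ and consists (apart from $0$) of invertible elements. This requires combining $G$-simplicity, the semisimplicity of $A_e$, and a counting/minimality argument on graded ideals; the subtlety is ruling out ``larger'' corner algebras, which is handled by choosing the $e_i$ to be primitive in $A_e$ and then showing any proper graded left ideal of $B$ would generate a proper graded left ideal of $A$, contradicting $G$-simplicity. The remaining steps are then bookkeeping with Peirce decompositions and are routine.
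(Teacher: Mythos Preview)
The paper does not prove Theorem~\ref{Bahturin Sehgal Zaicev theorem}; it is quoted as a known classification result of Bahturin, Sehgal and Zaicev (citing \cite{BSZ}, with refinements in \cite{BSZ0}, \cite{ElduqueKotchetov}, \cite{AljHaile}) and used as a black box in the proof of Theorem~\ref{Simple algebras forms}. So there is no ``paper's own proof'' to compare your proposal against.

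That said, your outline is essentially the standard argument one finds in those references: semisimplicity of $A$ (hence of $A_e$), a Peirce decomposition via a complete system of primitive orthogonal idempotents of $A_e$, identification of the corner $e_1Ae_1$ as a graded division algebra $F^{\alpha}H$, and reconstruction of the matrix grid. One point deserves tightening. You write that the $e_i$ are conjugate by \emph{invertible} homogeneous elements of $A$, but such elements need not exist: already for $G=\mathbb{Z}_3$, $H=\{e\}$, $s=2$, $(g_1,g_2)=(e,\sigma)$, the algebra $M_2(F)$ has $A_\sigma=Fe_{12}$ and $A_{\sigma^2}=Fe_{21}$, so no homogeneous element is invertible except the diagonal ones, and these fix each $e_{ii}$. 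What the argument actually uses is that $G$-simplicity forces $e_iAe_1\neq 0$ for every $i$; picking a nonzero homogeneous $v_i\in e_iA_{g_i}e_1$ and using that $e_1Ae_1$ is a graded division algebra, one finds a homogeneous $w_i\in e_1A_{g_i^{-1}}e_i$ with $w_iv_i=e_1$ and $v_iw_i=e_i$. The matrix units are then $e_{i,j}:=v_iw_j$, and the degree bookkeeping gives $u_h\otimes e_{i,j}\in A_{g_i^{-1}hg_j}$ (your stated formula has $g_i$ and $g_i^{-1}$ reversed relative to the convention in the statement, which is harmless but worth matching). With that adjustment your sketch is the BSZ proof.
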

Furthermore, we may replace the elements in the $s$-tuple by right H-cosets representatives and also, by permuting elements of the $s$-tuple we may assume equal representatives
are adjacent to each other (see \cite{BSZ0}, \cite{ElduqueKotchetov}, \cite{AljHaile}).

Unlike the ungraded case, namely matrix algebras over $F$, not every finite dimensional $G$-graded simple algebra over $F$ admits a division algebra form which is $G$-graded. In fact it may not admit a form which is a $G$-graded division algebra (i.e. nonzero homogeneous elements are invertible) (see \cite{AljadeffKarasikVerbally}, Theorems $1.8$ and $1.12$).

We have the following result.

\begin{thm} \label{Simple algebras forms} Notation as above.
Let $A$ be a finite dimensional $G$-graded simple algebra over an algebraically closed field $F$ of characteristic zero. Then $A$ admits a division algebra $G$-graded $K_{0}$-form, for some field $K_{0}$ which contains $\mu_{n_{G}}$, $n_{G} = exp(G)$, if and only if the following hold:

\begin{enumerate}
\item

$H$ is abelian.

\item

Every coset of $H$ is represented in the $s$-tuple (elementary grading). Moreover the number of representatives is equal for all cosets.

\item

$H$ is normal in $G$.

\item

The cohomology class $[\alpha] \in H^{2}(H, F^{*})$ is $G$ invariant where the action of $G$ on $F$ is trivial.

\end{enumerate}
Furthermore, with the condition above, $A$ admits a division algebra $G$-graded $K_{0}$-form where $K_{0}$ contains an algebraically closed field.
\end{thm}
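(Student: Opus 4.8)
### Proof Proposal for Theorem~\ref{Simple algebras forms}

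The plan is to prove the two directions separately, using the structure theorems for the forward direction and the realizability theorem (Theorem~\ref{Main realizable triple theorem}) for the converse. For the \emph{necessity} direction, suppose $A$ admits a $G$-graded division algebra $K_0$-form $D$ with $K_0\supseteq\mu_{n_G}$. Since $A\cong D\otimes_{K_0}F$ and $F$ is algebraically closed, the $e$-center of $D$ is exactly $K_0$ (central homogeneous elements of degree $e$ become $F$ after scalar extension), so Theorem~\ref{general structural theorem} applies directly to $D$. Its conclusions give us a normal abelian subgroup $H_0\trianglelefteq G$ (the kernel of the $G$-action on $L=Z(D_e)$), a $G/H_0$-invariant class $[\alpha_0]\in H^2(H_0,L^*)$, and the crossed-product structure. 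The main work here is to match $H_0$ with the subgroup $H$ appearing in the Bahturin--Sehgal--Zaicev presentation of $A$: one must show that extension of scalars sends the grading data of $D$ (the subgroup $H_0$, the cocycle, the coset representatives given by a transversal of $H_0$ in $G$ combined with the matrix part coming from $D_e$) to a BSZ presentation, and that such a presentation is unique up to the allowed moves (conjugating/permuting the $s$-tuple, replacing $\alpha$ by a cohomologous cocycle). Granting this uniqueness — which is exactly the content cited from \cite{BSZ0},\cite{ElduqueKotchetov},\cite{AljHaile} — we read off: (1) $H=H_0$ is abelian; (3) $H$ is normal; (4) $[\alpha]$ is $G$-invariant (the $G/H$-invariance from Theorem~\ref{general structural theorem}(6) becomes $G$-invariance over $F$ because, as noted in the discussion preceding Theorem~\ref{Main realizable triple theorem}, $H^2(H,F^*)^{G/H}\cong\mathrm{Hom}(M(H),\mu_{n_H})^{G/H}$ with trivial $G$-action). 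For (2), the elementary part of the grading on $A$ must come from an elementary grading on $M_d(F)$ tensored with the crossed-product transversal of $H$ in $G$; since $D$ is \emph{faithfully} $G$-graded and $D_H\cong D_e\otimes_L L^{\alpha}H$ has a homogeneous basis indexed by $H$, the $s$-tuple must contain a full set of coset representatives of $H$, and the crossed-product structure over $D_H$ forces each coset to be represented the \emph{same} number of times (namely $d\sqrt{[H:S]}$ times, matching Theorem~\ref{general structural theorem}(7)).

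For the \emph{sufficiency} direction, assume (1)--(4) hold. From the BSZ data $(H,[\alpha],\mathfrak g)$, conditions (2)--(3) let us record the extension $\beta:1\to H\to G\to Q\to 1$ with $Q=G/H$, and condition (4) together with (1) means $[\alpha]$ corresponds to a $Q$-invariant homomorphism $\phi\in\mathrm{Hom}(M(H),\mu_{n_H})^Q$ via the Universal Coefficients Theorem (identifying $M(H)\cong H\wedge H$ since $H$ is abelian). Condition (2) tells us the $s$-tuple consists of $m$ copies of a transversal of $H$ in $G$ for some $m$; set $d=m$ (or more precisely choose $d$ so that $d\sqrt{[H:S]}=m$, where $S$ is the degeneracy subgroup of $\phi$ — this is forced, so one checks $\sqrt{[H:S]}\mid m$ as a consequence of condition (2) combined with the structure of $F^{\alpha}H$). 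Now apply Theorem~\ref{Main realizable triple theorem} to the triple $([\beta],\phi,d)$: it produces a field $K_0$ containing an algebraically closed field of characteristic zero (hence containing $\mu_{n_G}$, as $n_H\mid n_G$) and a finite dimensional division algebra $D$, faithfully $G$-graded, realizing $([\beta],\phi,d)$. It remains to verify $D\otimes_{K_0}F'\cong A$ as $G$-graded algebras for $F'$ an algebraic closure of $K_0$ — equivalently, that $D\otimes_{K_0}F'$ has BSZ data $(H,[\alpha],\mathfrak g)$. This follows because the structure theorems (applied to $D$) show $D\otimes_{K_0}F'$ is $G$-simple with $e$-center $F'$, subgroup $H$, cocycle representing $\phi$ (hence $[\alpha]$ over $F'$), and elementary part indexed by $d$ copies of a transversal of $H$ — precisely $\mathfrak g$ up to the allowed equivalences. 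Then $A\cong(D\otimes_{K_0}F')$ as $G$-graded $F'$-algebras, and since any two algebraically closed fields of characteristic zero containing $F$ admit a common extension, $A$ and $D\otimes_{K_0}F'$ become isomorphic after a harmless base change, giving the desired $K_0$-form.

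The last sentence of the theorem — that $K_0$ may be taken to contain an algebraically closed field — is immediate from the ``moreover'' clause of Theorem~\ref{Main realizable triple theorem}, which guarantees the realization occurs over such a field.

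I expect the \textbf{main obstacle} to be the bookkeeping in the necessity direction: translating between the intrinsic grading data produced by Theorem~\ref{general structural theorem} (a normal abelian $H$, a $G/H$-invariant $[\alpha]\in H^2(H,L^*)$, the transversal-plus-$D_e$ crossed-product description) and the BSZ normal form $(H,[\alpha],\mathfrak g)$ of $A=D\otimes_{K_0}F$, and in particular establishing that \emph{every} coset of $H$ is represented \emph{equally often} in $\mathfrak g$. The equal-multiplicity statement is the subtle point: it is exactly where faithfulness of the $G$-grading on the \emph{division algebra} $D$ is used, since for a general $G$-simple algebra the coset multiplicities in the $s$-tuple are arbitrary, but the crossed-product structure $D_e * (G/H)$ forces the $H$-homogeneous components of $D$ to all have the same $L$-dimension, and this dimension count is what pins down the multiplicities after tensoring up to $F$. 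Care is also needed to ensure the subgroup $H$ from the structure theorem and the subgroup $H$ from a BSZ presentation can be taken to literally coincide — a priori BSZ only gives $H$ up to the ambiguity in choosing the presentation — but this is handled by observing that $H$ is canonically recovered from $A$ as (the support of) the kernel of the $G$-action on the $e$-center after any presentation is fixed, which matches the intrinsic definition in Theorem~\ref{general structural theorem}(1).
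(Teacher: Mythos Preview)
Your overall strategy --- structure theorems for necessity, Theorem~\ref{Main realizable triple theorem} for sufficiency --- matches the paper's, and the identification of the BSZ subgroup $H$ with the structure-theorem kernel $H_0$ via ``the centralizer of $Z(A_e)$'' is exactly what the paper does (though the paper carries out the explicit idempotent computation rather than appealing to presentation uniqueness). However, there is one concrete error that propagates through both directions.

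\textbf{The multiplicity is $d$, not $d\sqrt{[H:S]}$.} A dimension count shows this: since every $D_g$ is a free rank-one $D_e$-module, $\dim_{K_0}D=|G|\cdot\dim_{K_0}D_e=|G|\cdot[L:K_0]\cdot d^2=|H|\cdot[G:H]^2\cdot d^2$, while on the BSZ side $\dim_F A=|H|\cdot s^2=|H|\cdot m^2\cdot[G:H]^2$; hence $m=d$. The factor $\sqrt{[H:S]}$ measures the size of the simple blocks of $F^{\alpha}H$, but that factor already lives inside the $F^{\alpha}H$ tensor factor of the BSZ presentation and is \emph{not} counted again in the elementary tuple. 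Your parenthetical ``choose $d$ so that $d\sqrt{[H:S]}=m$'' is therefore wrong, and the divisibility check ``$\sqrt{[H:S]}\mid m$'' is false in general (e.g.\ $m=1$ with $\alpha$ nondegenerate and $|H|>1$). Your first instinct, ``set $d=m$'', is the correct one; the paper in fact first reduces to $d=m=1$ by a Hilbert twist and then realizes $([\beta],\phi,1)$.

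A smaller difference worth noting: for conditions (2)--(4) in the necessity direction the paper does not rederive them from Theorem~\ref{general structural theorem} but instead invokes \cite[Corollary~1.13]{AljadeffKarasikVerbally}, which gives these conditions from the weaker hypothesis that $D$ is a $G$-graded \emph{division} algebra (nonzero homogeneous elements invertible). Your route through Theorem~\ref{general structural theorem} is legitimate and more self-contained, but to make it rigorous you must actually verify that tensoring $D$ up to $F$ yields a BSZ presentation with subgroup $H_0$ --- this is precisely the paper's centralizer computation (showing $C_A(Z(A_e))=A_H$ via the block idempotents), and you should not skip it.
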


\section{proof of the structure theorems}

In this section we prove Theorems \ref{structure theorem 1}$-$\ref{graded center}. We start with Theorem \ref{structure theorem 1}.

\begin{proof}
Let $D$ be a finite dimensional algebra over its center $K$ and suppose $D$ is graded by a finite group $G$. We are assuming the center $K$ is contained in  $D_{e}$, the identity component of $D$. Let $L = Z(D_{e})$. Conjugation on $D_{e}$ with nonzero homogeneous elements induces an action of $G$ on $L$ and we denote by $H$ the kernel of this action. We have $K = L^{G} = L^{G/H}$.
We {\it claim} that $D_{H} \cong D_{e} \otimes_{L}L^{\alpha}H$ for some $\alpha \in Z^{2}(H, L^{*})$: For all $h \in H$, let $u_{h}$ be a nonzero homogeneous element of degree $h$. Because  $h$ acts trivially on $L$, and $D_{e}$ is finite dimensional over $L$, conjugation by $u_{h}$ determines an inner automorphism of $D_{e}$ and hence, by the Skolem-Noether Theorem,  there is a (nonzero) element $\theta_h$ in $D_{e}$ such that $x_{h} = u_{h}\theta_{h}^{-1}$ centralizes $D_{e}$. Let $h$ and $h'$ in $H$. Clearly $x_{h}x_{h'} = \alpha(h,h')x_{hh'}$ where $\alpha(h,h') \in D_{e}^{*}$,  but because $x_{h}$  and $x_{h'}$ centralize $D_{e}$,  we have $\alpha(h,h') \in L^{*}$.  It follows that $\alpha : H\times H \rightarrow L^{*}$ is a $2$-cocycle in $Z^{2}(H,L^{*})$. Finally, because the set $\{x_{h}\}_{h \in H}$ is linearly independent over $D_{e}$ (and in particular over $L$) we obtain $D_{H} = D_{e}\otimes_{L}L^{\alpha}H$,  as desired.

Next we will  show $L$ is the center of the twisted group algebra $L^{\alpha}H$:  If $L_{1} = Z(L^{\alpha}H) = Z(D_{H})$,  conjugation by nonzero homogeneous elements of $D$ induces an action of $G$ on $L_{1}$ with fixed field $K$. But $H$ acts trivially on $L_{1}$ and so $L_{1}^{G/H} = K$. Because  $L \subseteq L_{1}$ and the action of $G/H$ is faithful, we obtain $L=L_{1}$, as desired.
It follows that the $2$-cocycle $\alpha$ is nondegenerate and $H$ is of central type.

We claim next that $H$ is abelian: Because this may be of independent interest we state it as a separate proposition. Its proof is presented right after the end of the proof of Theorem \ref{structure theorem 1}.

\begin{prop} \label{abelian group twisted group algebra proposition}
Let $L$ be a field of characteristic zero and let $H$ be a finite group of exponent $n_{H}$. Let $L^{\alpha}H$ be a twisted group algebra of $H$ over $L$ where $\alpha \in H^{2}(H,L^{*})$ and the action of $H$ on $L$ is trivial. Suppose the field $L$ contains the group $\mu_{n_{H}}$ of $n_{H}$th roots of unity. If $L^{\alpha}H$ is a division algebra then $H$ is abelian.
\end{prop}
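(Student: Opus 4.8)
The plan is to argue by contradiction: suppose $L^{\alpha}H$ is a division algebra but $H$ is not abelian, and extract a zero divisor. First a reduction. For any subgroup $H_{1}\le H$ the twisted group algebra $L^{\alpha}H_{1}$ built from the restricted cocycle sits inside $L^{\alpha}H$, hence is a finite-dimensional domain, hence a division algebra, and $\exp(H_{1})\mid n_{H}$, so the roots-of-unity hypothesis persists; inducting on $|H|$ we may thus assume every proper subgroup of $H$ is abelian, i.e.\ $H$ is minimal non-abelian. For such $H$ every maximal subgroup is abelian (Miller--Moreno); fixing one, $A$, it is abelian, normal, self-centralizing ($C_{H}(A)=A$) and of prime index $\ell$. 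Choose $u\in H\setminus A$ and let $\varphi\in\operatorname{Aut}(A)$ be conjugation by $u$; self-centralization forces $\varphi\ne\operatorname{id}$. Also, $L^{\alpha}A$ being a sub-division-algebra is a domain, so $[\alpha|_{A}]\ne0$ in $H^{2}(A,L^{*})\cong\operatorname{Hom}(A\wedge A,L^{*})$ --- otherwise $L^{\alpha}A\cong L[A]\cong L^{|A|}$, which has idempotents because $\mu_{\exp A}\subseteq L$.

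The first engine is a Kummer computation. For $a\in A$ of order $m$ one has $u_{a}^{m}=\gamma\in L^{*}$, and $L[u_{a}]$, a commutative domain, equals the field $L(\sqrt[m]{\gamma})$; since $\mu_{m}\subseteq L$ this forces $\gamma\notin L^{*p}$ for every prime $p\mid m$. If $\varphi$ stabilizes some cyclic subgroup $\langle a\rangle$ on which it acts nontrivially, say $\varphi(a)=a^{j}\ne a$, then applying $\operatorname{conj}_{u_{u}}$ --- which fixes $L$ pointwise --- to the identity $u_{a}^{m}=\gamma$ yields an equation $c^{m}\gamma^{\,j}=\gamma$ with $c\in L^{*}$, and a short $p$-adic manipulation, again using $\mu_{m}\subseteq L$, converts this into $\gamma\in L^{*p}$ for some prime $p\mid m$ --- a contradiction. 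This disposes of every minimal non-abelian $H$ in which $\varphi$ has a rational eigenvector on a cyclic subquotient: $H=Q_{8}$, the metacyclic $p$-groups (use $\langle a\rangle$ for the standard generator, where $\varphi(a)=a^{1+p^{m-1}}$), $S_{3}$, and the groups $\mathbb{Z}_{p}\rtimes\mathbb{Z}_{q^{k}}$.

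What is left are the irreducible semidirect products $\mathbb{Z}_{p}^{r}\rtimes\mathbb{Z}_{q^{k}}$ with $r\ge2$ and the extraspecial-type $p$-groups. In the first, $A$ contains the $\varphi$-invariant $A_{0}=\mathbb{Z}_{p}^{r}$, and irreducibility leaves only $S_{0}:=\operatorname{rad}\phi_{[\alpha|_{A_{0}}]}\in\{0,A_{0}\}$; if $S_{0}=A_{0}$ then $L^{\alpha}A_{0}\cong L^{p^{r}}$ has idempotents, and if $S_{0}=0$ then $B:=L^{\alpha}A_{0}$ is central simple over $L$ of degree $p^{r/2}$, so by Skolem--Noether $\varphi|_{A_{0}}=\operatorname{conj}_{b_{0}}$ for some $b_{0}\in B^{*}$ with $b_{0}^{\ell}\in L^{*}$; but $\ell=q$ is prime to $p^{r/2}$, so the field $L[b_{0}]=L(\sqrt[\ell]{b_{0}^{\ell}})$ cannot be a proper subfield of $B$, forcing $b_{0}\in L^{*}$ and $\varphi|_{A_{0}}=\operatorname{id}$ --- contradiction. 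The extraspecial case, in which $[\alpha|_{A}]$ is nondegenerate, is the genuinely delicate one; here I would argue that $\mu_{n_{H}}\subseteq L$ makes such a class impossible in the first place: in the Lyndon--Hochschild--Serre spectral sequence of $1\to A\to H\to\langle u\rangle\to1$ the transgression $H^{2}(A,L^{*})^{\langle u\rangle}\to H^{2}(\langle u\rangle,\operatorname{Hom}(A,L^{*}))$ does not annihilate the nondegenerate class, precisely because $\mu_{\ell}\subseteq L$ makes $\operatorname{Hom}(A,L^{*})$, hence the transgression target, large enough to carry the obstruction; so $[\alpha|_{A}]$ must be degenerate and we fall back to the idempotent case.

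I expect the extraspecial case --- pinning down that transgression obstruction --- to be the main obstacle. The reduction and the Miller--Moreno bookkeeping are routine, and the Kummer and coprimality arguments are short, but it is exactly in the central-simple situation that the hypothesis $\mu_{n_{H}}\subseteq L$ has to be exploited, and one must check with care that dropping it genuinely lets the non-abelian examples reappear. (An alternative route that sidesteps the classification: $h\mapsto u_{h}L^{*}$ embeds $H$ into $D^{\times}/L^{\times}$, carrying commuting elements to commuting ones and non-commuting to non-commuting, so $H$ non-abelian would give a non-abelian finite subgroup of $D^{\times}/L^{\times}$ of exponent dividing $n_{H}$; lifting it to a finite subgroup of $D^{\times}$ and invoking Amitsur's classification of finite subgroups of division rings, the cases with non-abelian central quotient --- the $SL_{2}$-types --- are excluded because $\mu_{n_{H}}\subseteq Z(D)$; but this needs the separate input that $[\alpha]$ is represented by a $\mu_{n_{H}}$-valued cocycle.)
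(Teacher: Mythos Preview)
Your reduction to minimal non-abelian groups and the Kummer argument for the cases with a $\varphi$-invariant cyclic subgroup on which $\varphi$ acts nontrivially are sound, and this line is genuinely different from the paper's route. The paper does \emph{not} classify minimal non-abelian groups; instead it passes to the group $\Gamma$ of trivial units, uses Schur's theorem to get $\Gamma'$ finite, applies Amitsur's classification of finite subgroups of division rings to pin down $\Gamma'$, and from that deduces via a short group-theoretic lemma that $H$ is nilpotent. The $p$-group case is then finished by a direct computation with the (now cyclic) $\Gamma'$. So your parenthetical ``alternative route'' through Amitsur is essentially what the paper actually does, and the input you flag as missing (a $\mu_{n_H}$-valued representative) is replaced by the observation that $\Gamma'$ is finite.

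There are, however, real gaps in your main argument. First, in the irreducible semidirect-product case you write ``if $S_0=A_0$ then $L^{\alpha}A_0\cong L^{p^r}$ has idempotents''. This conflates the vanishing of the skew form with the vanishing of the cohomology class. When the skew form is trivial, $L^{\alpha}A_0$ is commutative, but it can perfectly well be a field (take $p$-independent Kummer generators). There is an easy repair --- the nontrivial $L$-automorphism $\mathrm{conj}_{u_u}$ would then have $q$-power order on a degree-$p^r$ field extension of $L$, which is impossible --- but that is not what you wrote. In the $S_0=0$ branch your claim $b_0^{\ell}\in L^{*}$ also needs care: $u_u^{\,q}$ is homogeneous of degree $u^q\in A\setminus A_0$, and there is no reason $u_{u^q}$ centralizes $B$; one only gets $b_0^{pq}\in L^{*}$, after which a short degree argument finishes.

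The serious problem is the non-metacyclic (``extraspecial-type'') minimal non-abelian $p$-group case. Your spectral-sequence sketch is circular: you already \emph{have} a class $[\alpha]\in H^2(H,L^{*})$ restricting to $[\alpha|_A]$, so by the very meaning of the LHS edge map the differentials $d_2,d_3$ must vanish on $[\alpha|_A]$. You cannot then argue that $d_2$ is nonzero on it. Moreover the assertion that $[\alpha|_A]$ is nondegenerate in this case is unsupported (for $A\cong\mathbb{Z}_{p^m}\times\mathbb{Z}_{p^{n-1}}\times\mathbb{Z}_p$ it usually cannot be), and even if you succeed in showing $[\alpha|_A]$ is degenerate you do not ``fall back to the idempotent case'', for the same reason as above: a degenerate skew form still allows $L^{\alpha}A$ to be a field. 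The paper's $p$-group argument avoids this entirely by working with the cyclic commutator group $\Gamma'$ rather than with $A$; if you want to push your Miller--Moreno strategy through, this is the case that needs a new idea, for instance a direct Kummer computation showing that the relation $u_b u_a u_b^{-1}=(\text{scalar})\,u_a u_c^{-1}$ forces $u_c^{\,p}\in L^{*p}$.
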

Note that the commutativity of the group $H$ in Theorem \ref{structure theorem 1} follows because $\mu_{n_H} \leq \mu_{n_{G}} \leq k^{*} \leq L^{*}$.

Next we will show that the class $[\alpha] \in H^{2}(H, L^{*})$  is $G$ invariant, where the action of $G$ on the cohomology is induced by the given action on $H$ and $L$. Let $y_{g}$ be a nonzero homogeneous element of degree $g \in G$. If $h\in H$,  then   $y_{g}x_{h}y_{g}^{-1}$ lies in $D_H$ and commutes with the elements of  $D_{e}$.  It follows that conjugation by $y_g$ stabilizes $L^\alpha H$.    We need to show the cocycle $g(\alpha)$ determined by $g(\alpha)(h,h') = y_{g}^{-1}\alpha (ghg^{-1}, gh'g^{-1})y_{g}$ is cohomologous to  $\alpha$.
Write $x_{h}x_{h'} = \alpha(h,h')x_{hh'}$. Conjugating both sides by $y_{g}^{-1}$ we obtain
$$
y_{g}^{-1}x_{h}y_{g}y_{g}^{-1}x_{h'}y_{g} = y_{g}^{-1}\alpha(h,h')y_{g}y_{g}^{-1}x_{hh'}y_{g}.
$$

Because $y_{g}^{-1}x_{h}y_{g} \in D_{g^{-1}hg}$ and centralizes $D_{e}$, we have $y_{g}^{-1}x_{h}y_{g} = \gamma_{h}x_{g^{-1}hg}$ where $\gamma = \gamma(g): H \rightarrow L^{*}$. Note that the elements of $D_{e}$ centralize $x_{h}$, so the value $\gamma_{h}$ does not depend on the choice of $y_{g} \in D_{g} = D_{e}y_{g}$.

Thus the equation above yields
$$
\gamma_{h}x_{g^{-1}hg}\gamma_{h'}x_{g^{-1}h'g} = y_{g}^{-1}\alpha(h,h')y_{g}\gamma_{hh'}x_{g^{-1}hh'g}
$$

and so
$$
\gamma_{h}\gamma_{h'}\alpha(g^{-1}hg, g^{-1}h'g)x_{g^{-1}hh'g} = y_{g}^{-1}\alpha(h,h')y_{g}\gamma_{hh'}x_{g^{-1}hh'g}.
$$
We conclude that
$$
\gamma_{h}\gamma_{h'}\alpha(g^{-1}hg, g^{-1}h'g) = y_{g}^{-1}\alpha(h,h')y_{g}\gamma_{hh'},
$$
showing the cocycles are cohomologous.

The algebra $D_H$ is an $L$-central division of degree $\sqrt{\dim_{L}(D_{e})|H|}$ and $D$ is a $G/H$-crossed product.
Moreover $K\subseteq L\subseteq D_H\subseteq D$ and $[D:D_H]=[L:K]=|G/H|$,  so
$deg(D)=\sqrt{\dim_K(D)}= deg(D_H)|G/H|=  \sqrt{\dim_{L}(D_{e})|H|}[G:H]$.
\end{proof}

We proceed with the proof of Proposition~\ref{abelian group twisted group algebra proposition}.
\begin{proof}
Our objective in the first part of the proof is to show the group $H$ must be nilpotent. This will reduce the problem to $p$-groups, a case which is treated in Lemma \ref{abelian p-group lemma}.

\begin{lem} \label{nilpotency}
Let $H$ be a finite group. Suppose
\begin{enumerate}
\item
The commutator subgroup $H'$ is nilpotent.
\item
If $p$ and $q$ are different primes, every $p$-element $z \in H$ commutes with every $q$-element $w \in H'$.
\end{enumerate}
Then $H$ is nilpotent.
\end{lem}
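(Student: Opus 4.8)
The plan is to prove nilpotency of $H$ by producing its Sylow decomposition, i.e.\ by showing that every Sylow subgroup is normal (equivalently that $H$ is the direct product of its Sylow subgroups). We are given that $H' $ is nilpotent and that for distinct primes $p,q$ every $p$-element of $H$ centralises every $q$-element of $H'$; these two hypotheses should be combined to control how the Sylow subgroups of $H$ sit relative to $H'$ and to each other.

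First I would fix a prime $p$ and let $P$ be a Sylow $p$-subgroup of $H$. Since $H'$ is nilpotent it is the direct product $H' = \prod_q H'_q$ of its Sylow subgroups $H'_q$, each of which is characteristic in $H'$ and hence normal in $H$; in particular the $p$-part $P' := H'_p = P\cap H'$ is normal in $H$. The key point will be to analyse the normal series $H' \trianglelefteq H$: the quotient $H/H'$ is abelian, so $P H'/H'$ is the (normal, because $H/H'$ is abelian) Sylow $p$-subgroup of $H/H'$, and therefore $PH' \trianglelefteq H$. Now $PH' = P \cdot \prod_{q\ne p} H'_q$, and by hypothesis (2) every element of $P$ (a $p$-element) commutes with every element of each $H'_q$ with $q\neq p$ (a $q$-element of $H'$). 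Hence $PH' = P \times \prod_{q\ne p} H'_q$ is an internal direct product, which forces $P$ to be characteristic in $PH'$ (it is the unique Sylow $p$-subgroup of $PH'$), and consequently $P \trianglelefteq H$. Since $p$ was arbitrary, every Sylow subgroup of $H$ is normal, so $H$ is nilpotent.

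The step I expect to be the main obstacle is the verification that $PH'$ is normal in $H$ and, more delicately, that inside $PH'$ the subgroup $P$ is genuinely a \emph{direct} factor: one has to be careful that $P\cap \prod_{q\ne p}H'_q = 1$ (clear by order considerations) and that the two factors commute elementwise, where hypothesis (2) is used in precisely the form stated (a $p$-element versus a $q$-element \emph{of $H'$}). Once $P$ is seen to be the unique Sylow $p$-subgroup of the normal subgroup $PH'$, normality of $P$ in $H$ is automatic, and assembling this over all primes $p$ gives the result. I would also double-check the edge case where $p\nmid |H'|$, i.e.\ $P'=1$, which is harmless: then $P\cap H'=1$ and the same commuting argument applies verbatim with $\prod_{q\ne p}H'_q = H'$.
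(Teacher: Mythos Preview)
Your proof is correct. Both you and the paper aim to show that every Sylow $p$-subgroup $P$ of $H$ is normal, and both rely on the same two ingredients: the nilpotency of $H'$ gives $H'=\prod_q H'_q$ with each $H'_q$ characteristic in $H'$ and hence normal in $H$ (so in particular $H'_p\le P$), and hypothesis~(2) makes $P$ centralise $\prod_{q\ne p}H'_q$. The organisation, however, differs. The paper argues elementwise: for $y\in H$ and $x\in P$ one writes $yxy^{-1}=cx$ with $c\in H'$, decomposes $c=c_1c_2$ into its $p'$- and $p$-parts, and uses the commutation hypothesis to conclude that $c_1=1$, whence $yxy^{-1}=c_2x\in P$. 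Your route is more structural: you first note that $PH'\trianglelefteq H$ because $H/H'$ is abelian, and then that $PH'=P\times\prod_{q\ne p}H'_q$ is an internal direct product, so $P$ is the unique Sylow $p$-subgroup of the normal subgroup $PH'$ and therefore normal in $H$. Your argument is slightly cleaner and makes the role of each hypothesis transparent at the group-theoretic level; the paper's version has the minor virtue of displaying explicitly how the commutator of a conjugation lands back in $P$.
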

\begin{proof}
We show every Sylow $p$-subgroup of $H$ is normal. Let $y \in H$ and $x\in P$, a Sylow $p$-subgroup of $H$. Write $yxy^{-1} = cx$ with $c \in H'$ and by the nilpotency of $H'$ let $c = c_{1}c_{2}$ where $c_{1}$ and $c_{2}$ are elements in $H'$ of orders $p'$ (prime to $p$) and $p$ respectively. Because $yxy^{-1}$ commutes with $c_{1}$ it commutes with $c_{2}x$. We claim $c_{2}x$ is a $p$-element: Indeed, because $(H')_{p}$, the $p$-Sylow subgroup of $H'$, is characteristic in $H'$, it is normal in $H$ and hence it is contained in every $Sylow$ $p$-subgroup of $H$ proving the claim. But $yxy^{-1}$ and $c_{2}x$ commute and hence $yxy^{-1}(c_{2}x)^{-1} = c_{1}$ is both a $p$ and a $p'$ element. It follows that $yxy^{-1} = c_{2}x$ and because $c_{2} \in  (H')_{p} \leq P$, we have $c_{2}x \in P$ and the lemma is proved.
\end{proof}
We now resume the proof of the proposition. Let $\Gamma$ be the group of trivial units in $L^{\alpha}H$, that is the group of elements of the form $l_hu_{h}$, where $ h\in H$ and $l_h\in L^{*}$. Because $\Gamma$ is center by finite, it follows by a theorem of Schur that the commutator subgroup $\Gamma'$ of $\Gamma$ is finite. Note that $\Gamma/L^{*} \cong H$ and $\Gamma'/(L^{*}\cap \Gamma) \cong H'$. Invoking Amitsur's classification of finite subgroups of $D^{*}$ where $D$ is an arbitrary division algebra (\cite{Amitsur}), we have that $\Gamma'$ is one of the following: ($1$) metacyclic (including cyclic) ($2$) $\hat{A}_{4}$ the binary tetrahedral group of order $24$ ($3$) $\hat{S}_{4}$ the binary octahedral group of order $48$ ($4$) $\hat{A}_{5}$ the binary icosahedral group of order $120$.

We first show that under our assumptions cases $2-4$ are not possible. Then in case ($1$) we will show that the group $H$ satisfies the conditions of Lemma \ref{nilpotency} and hence is nilpotent.

For cases $2-4$ we apply (\cite{AljadeffSonn} Section $2$, Main Lemma). In that lemma it is proved that $\Gamma'$ contains a subgroup $U$, normal in $\Gamma$, with the property that $U$ is isomorphic to $Q_{8}$, the quaternion group of order $8$. We \textit{claim} that $U \cap L^{*} \cong \mathbb{Z}_{2}$: If not $U \cap L^{*} = \{e\}$ and so $U$ embeds in $H$. By the Universal Coefficient Theorem together with the fact that the Schur multiplier $M(U)$ is trivial, we have the group algebra $LU' \subseteq L^{\alpha}H$, showing $L^{\alpha}H$ cannot be a division algebra. This proves the claim. It follows that because $L^{\alpha}H$ is a division algebra,  the group $U$ spans $(-1,-1)_{2, L}$, the Hamilton quaternions over $L$. Next we claim the algebra $(-1,-1)_{2, L}$ is split contradicting $L^{\alpha}H$ is a division algebra. To see this, note that in cases $2-4$ the $3$-Sylow subgroup of $\Gamma'$ is not central, and so $3\mid exp(H)$. It follows by our assumption that $\zeta_{3} \in L^{*}$ and so $(-1,-1)_{2, L}$ is split.

We turn now to case ($1$) when $\Gamma'$ is metacyclic. The treatment in this case is divided into two cases, denoted $(1.1)$ and $(1.2)$ in the notation of (\cite{AljadeffSonn} Section $2$, Main Lemma). In case $(1.1)$ it is shown in that lemma that $\Gamma'$ must be cyclic. In case $(1.2)$ it is shown that $\Gamma' \cong C_{t} \times U$ where $C_{t}$ is cyclic of odd order and $U$ is isomorphic to $Q_{8}$. Furthermore, as above $U \cap L^{*} \cong \mathbb{Z}_{2}$ and $U$ spans $(-1,-1)_{2, L}$, the Hamilton quaternions over $L$. The proof of the nilpotency of $H$ will be completed if we show that in both cases $(1.1)$ and $(1.2)$ the group $H$ satisfies the conditions of Lemma \ref{nilpotency}. The nilpotency of $H' \cong L^{*}\Gamma'/L^{*}$ is evident. For the second condition we assume first $\Gamma'$ is cyclic: Because a $q$-Sylow subgroup $(\Gamma')_{q}$ of $\Gamma'$ is characteristic in $\Gamma'$, it is normal in $\Gamma$. But the presence of $\zeta_{q} \in L^{*}$ implies the inner automorphisms on $(\Gamma')_{q}$ are of $q$-power order which means that for $p \neq q$, any $p$-element of $H$ centralizes $(\Gamma')_{q}$. Suppose now $\Gamma' \cong C_{t} \times U$ where $t$ is odd. As above every $q$-Sylow subgroup $(\Gamma')_{q}$ is characteristic in $\Gamma'$ and hence normal in $\Gamma$. If $q$ is odd the argument is as above. Suppose $q = 2$. The automorphism group of $U$ is $S_{4}$ and the only way the second condition of the lemma can fail is if there exists an element $y \in H$ whose order is divisible by $3$ that acts nontrivially on $L^{*}U/L^{*}$ by conjugation. But this is not possible because in that case $\zeta_{3} \in L^{*}$ and the algebra $(-1,-1)_{2, L}$ is split. \end{proof}

As promised we complete the proof of Proposition ~\ref{abelian group twisted group algebra proposition} with the following lemma.
\begin{lem}\label{abelian p-group lemma}
Let $H$ be a finite $p$-group, say of exponent $p^{r}, r\geq 1$. Let $L$ be a field of characteristic zero containing $\zeta_{p^{r}}$, a primitive $p^{r}th$ root of unity. If a twisted group algebra $L^{\alpha}H$ is a division algebra, the group $H$ must be abelian.
\end{lem}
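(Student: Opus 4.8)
\emph{Plan.} The idea is to induct on $|H|$, reduce to $H$ minimal nonabelian, and then exploit the classification of such $p$-groups. So suppose $H$ is a nonabelian $p$-group of least order for which some $L^{\alpha}H$ is a division algebra with $\zeta_{p^{r}}\in L$, $p^{r}=\exp(H)$. For any proper subgroup $K<H$, the subalgebra $L^{\alpha}K\subseteq L^{\alpha}H$ is a division algebra and $\zeta_{\exp K}\in\mu_{p^{r}}\subseteq L$, so by minimality $K$ is abelian; hence $H$ is minimal nonabelian. By the classical structure theory of minimal nonabelian $p$-groups one then knows that $H'$ is central of order $p$, that $H$ is $2$-generated, and that either (i) $H$ is metacyclic, or (ii) $H\cong M_{p}(m,n,1)=\langle a,b,c \mid a^{p^{m}}=b^{p^{n}}=c^{p}=1,\ [a,b]=c,\ [a,c]=[b,c]=1\rangle$ with $m\geq n\geq 1$. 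Throughout I would use the basic observation that $u_{g}^{|g|}\in L^{*}$ for every $g\in H$, and that if $u_{g}^{|g|}$ is a $p$-th power in $L$ then, since $\zeta_{p}\in L$, the algebra $L[u_{g}]\cong L[X]/(X^{|g|}-u_{g}^{|g|})$ acquires a nontrivial idempotent, impossible inside a division algebra; thus one may assume $u_{g}^{|g|}\notin(L^{*})^{p}$ for the finitely many relevant $g$, and then $X^{|g|}-u_{g}^{|g|}$ is irreducible over $L$ (using $\zeta_{|g|}\in L$), so $L[u_{g}]$ is a cyclic field extension of $L$ of degree $|g|$.

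For case (i) I would take the cyclic normal subgroup $N=\langle a\rangle\trianglelefteq H$ with $H/N$ cyclic and write $b^{-1}ab=a^{t}$; since $H$ is nonabelian, $t\not\equiv 1\pmod{|a|}$, and $|a|=p^{m}$ with $m\geq 2$ (if $|a|=p$ then $\operatorname{Aut}(N)$ has no $p$-torsion, so $N$ is central and $H$ abelian). By the observation $F:=L[u_{a}]$ is a degree-$p^{m}$ cyclic extension of $L$, and $u_{b}^{-1}u_{a}u_{b}$ is a scalar multiple of $u_{a^{t}}\in F$, so conjugation by $u_{b}$ restricts to an $L$-algebra automorphism of the field $F$; it must therefore send $u_{a}$ to a root $\zeta_{p^{m}}^{j}u_{a}$ of its minimal polynomial $X^{p^{m}}-u_{a}^{p^{m}}$. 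Comparing with $u_{b}^{-1}u_{a}u_{b}=(\mathrm{scalar})\,u_{a}^{t}$ forces $u_{a}^{t-1}\in L^{*}$; but $u_{a}^{t-1}$ generates a subfield of $F$ of degree $p^{m}/\gcd(p^{m},t-1)\geq p$, a contradiction.

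For case (ii) I would work inside the normal abelian subgroup $N:=\langle a,c\rangle\cong\mathbb{Z}_{p^{m}}\times\mathbb{Z}_{p}$, for which $H/N$ is cyclic and $b^{-1}ab=ac$. If $m\geq 2$, then $\theta:=u_{a}^{p}$ lies in the centre of the division subalgebra $L^{\alpha}N$ (the element $a^{p}$ is in the radical of the commutator pairing on $N$), and by the observation $L[\theta]$ is a field of degree $p^{m-1}\geq p$ over $L$ stable under $\Psi:=(\text{conjugation by }u_{b})$. Using $u_{b}^{-1}u_{a}u_{b}=(\mathrm{scalar})\,u_{a}u_{c}$ and $(u_{a}u_{c})^{p}=\rho^{-\binom{p}{2}}u_{a}^{p}u_{c}^{p}$ (with $\rho=u_{a}u_{c}u_{a}^{-1}u_{c}^{-1}\in\mu_{p}$) one computes $\Psi(\theta)=(\mathrm{scalar})\,u_{c}^{p}\,\theta$ for $p$ odd; since $\Psi(\theta)$ must be a root $\zeta_{p^{m-1}}^{l}\theta$ of the minimal polynomial of $\theta$, and $\zeta_{p^{m-1}}=\zeta_{p^{m}}^{p}\in(L^{*})^{p}$, this forces $u_{c}^{p}\in(L^{*})^{p}$, contradicting $u_{c}^{p}\notin(L^{*})^{p}$. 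For $p=2$ there is a residual sign $\rho^{-\binom{2}{2}}=-1$, and one instead uses $i\in L$ to conclude $u_{c}\in L$, again impossible. Finally, if $m=1$ then necessarily $p$ is odd (as $M_{2}(1,1,1)\cong D_{4}$ is metacyclic, hence covered by case (i)) and $H$ is extraspecial of order $p^{3}$; then $\dim_{L}L^{\alpha}H=p^{3}$ forces $L^{\alpha}H$ to have degree $p$ over its centre (it is noncommutative since $[u_{a},u_{b}]\notin L$, and $p^{3}$ factors as $[Z:L]\cdot\deg^{2}$ only with $\deg=p$). Since $p$ is odd, the reduced norm of $u_{c}$ equals $u_{c}^{p}$ whether or not $u_{c}$ is central; but $u_{c}$ equals the commutator $[u_{a},u_{b}]$ up to a scalar $\lambda\in L^{*}$, so $\operatorname{Nrd}(u_{c})=\operatorname{Nrd}(\lambda)^{-1}\cdot\operatorname{Nrd}([u_{a},u_{b}])=\lambda^{-p}$, whence $u_{c}^{p}\in(L^{*})^{p}$, the same contradiction. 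This exhausts the cases and, by induction, proves the lemma, hence Proposition~\ref{abelian group twisted group algebra proposition}.

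The step I expect to be the main obstacle is the root-of-unity bookkeeping in case (ii): the hypothesis only furnishes $\zeta_{p^{r}}\in L$ with $p^{r}=\exp(H)$, and one must check this supplies precisely the $p$-th powers and roots of unity needed (for instance $\zeta_{p^{m-1}}\in(L^{*})^{p}$ when $m\geq 2$), together with the slightly delicate behaviour at $p=2$, where one uses $i\in L$ both to absorb a sign and to handle the reducibility of $X^{2^{m}}-d$. A secondary, more cosmetic choice is whether to quote the classification of minimal nonabelian $p$-groups outright or instead to extract from ``all proper subgroups abelian'' the facts actually used --- that $H$ is $2$-generated, $|H'|=p$, and has the displayed (abelian-normal)-by-cyclic presentations; quoting the classification is the shortest path.
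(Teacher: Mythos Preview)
Your proof is correct and takes a genuinely different route from the paper. The paper does not induct on $|H|$ or invoke R\'edei's classification of minimal nonabelian $p$-groups; instead it works inside $\Gamma$, the group of trivial units $l_{h}u_{h}$, and uses the analysis already carried out for Proposition~\ref{abelian group twisted group algebra proposition} (which rests on Amitsur's classification of finite subgroups of division rings) to conclude that $\Gamma'$ is cyclic, say $\Gamma'=\langle z\rangle$ of order $p^{s}$. Writing $yxy^{-1}=zx$ and $yzy^{-1}=z^{\alpha+1}$, a direct divisibility computation with $M_{1}=((\alpha+1)^{p^{r}}-1)/\alpha$ then forces $s\le r$, whence $\Gamma'\subseteq\mu_{p^{r}}\subseteq L^{*}$ and $H$ is abelian. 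Your approach trades Amitsur's theorem for R\'edei's classification and replaces the numerical $M_{1}$ computation by Kummer theory (conjugation by $u_{b}$ must permute the Kummer roots of $L[u_{a}]$ or $L[u_{a}^{p}]$ by roots of unity) together with the reduced-norm trick in the extraspecial case; this makes the lemma self-contained, no longer leaning on the Amitsur-based portion of the surrounding proposition, at the cost of citing R\'edei. The root-of-unity bookkeeping you flag does go through: in case~(ii) with $m\ge 2$ the element $a$ has order $p^{m}$, so $p^{m}\mid\exp(H)$ and $\zeta_{p^{m}}\in L$; for $p=2$ this gives $i\in L$ and hence $-1\in(L^{*})^{2}$, absorbing the stray sign $\rho^{-1}$. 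One small remark: your closing ``hence Proposition~\ref{abelian group twisted group algebra proposition}'' should be read as ``together with the paper's reduction to $p$-groups'', since your argument by itself treats only the $p$-group case.
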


\begin{proof}
Notation as above. Let $p^{s}$ the order of $\Gamma'$. We \textit{claim} $\Gamma'$ is cyclic. Indeed, by the analysis given in the first part of the proof of Proposition \ref{abelian group twisted group algebra proposition} the only way this can be false is if $\Gamma' \cong Q_{8}$ which spans $(-1,-1)_{2, L}$ over $L$. The field $L$ cannot contain $\mu_{4}$ because otherwise $(-1,-1)_{2, L}$ is split. Therefore the exponent of $H$ must be $2$ and hence $H$ is abelian in this case. This proves the claim.
So, let $\Gamma' = \langle z \rangle$ where $z$ has order $p^{s}$. Because $\Gamma'$ is a cyclic $p$-group the element $z$ is a single commutator and so there are elements $x$ and $y$ such that $yxy^{-1} = zx$. Following the assumption that $L$ contains $\zeta_{p^{r}}$, we let $\zeta_{p^{r_{1}}}$, $r_{1} \geq r$ be the maximal $p$-power root of unity in $L^{*}$. Note that we may assume such a maximum exists, for otherwise the finite cyclic group $\Gamma'$ would be contained in $L^{*}$ showing $H$ is abelian. For the proof of the lemma we may assume the integers ${r, r_{1}, s}$ satisfy the following relations: $s \geq r_{1} \geq r \geq 1$ and $s - r_{1} \leq r$.
We will show that $s \leq r$. In that case $\Gamma' \subset L^{*}$, so $H$ is abelian.

Let $\alpha$ be a nonnegative integer such that $yzy^{-1} = z^{\alpha +1}$. Because $z^{p^{s-r_{1}}}$ is central, we have on the one hand $yz^{p^{s-r_{1}}}y^{-1} = z^{p^{s-r_{1}}}$ and on the other hand
$$yz^{p^{s-r_{1}}}y^{-1} = (z^{\alpha +1})^{p^{s-r_{1}}} = z^{p^{s-r_{1}}(\alpha + 1)}.$$
This implies $p^{s}$ divides $p^{s-r_{1}}\alpha$, or equivalently that $p^{r_{1}}$ divides $\alpha$. Hence we can write $\alpha = k_{0}p^{t}$ where $(k_{0}, p) = 1$ and $t \geq r_{1}.$
Now, applying conjugation by $y$, $p^{r}$ times on $x$, we obtain
$$x =  y^{p^{r}}xy^{-p^{r}} = z^{{(\alpha + 1)}^{p^{r}-1}}z^{{(\alpha + 1)}^{p^{r}-2}}\cdots z^{(\alpha + 1)}z\cdot x
$$
and so $z^{({(\alpha + 1)^{p^{r}} - 1})/\alpha} = 1$ in $\Gamma$. This shows $p^{s}$ divides $M_{1} = {({(\alpha + 1)^{p^{r}} - 1})/\alpha}.$
Claim: $s = r_{1}$. Write
$$
M_{1} = {({(\alpha + 1)^{p^{r}} - 1})/\alpha} = p^{r} + (p^{r}(p^{r} -1)/2)\alpha + a_{3}\alpha^{2}+\cdots +{p^{r}}\alpha^{p^{r} -2} + 1 \alpha^{p^{r} -1}
$$
where $a_{3}$ is a binomial coefficient. Because $\alpha$ is divisible by $p^{t}$ where $t \geq r_{1} \geq r \geq 1$, we have
$$M_{1} \equiv p^{r} + (p^{r}(p^{r} -1)/2) \alpha \mod (p^{r+1}).
$$
Now, if $p$ is odd or $p = 2$ and $r \geq 2$, $M_{1}\equiv p^{r}$ $\mod (p^{r+1})$ and so the only way $M_{1}$ can be divisible by $p^{s}$ is if $s \leq r$.
We are left with the case where $exp(H) = 2$, which implies $H$ is abelian. This completes the proof of the lemma and also of the proposition.
\end{proof}

In the next example we show the presence of $\mu_{n_{H}}$ in $L$ is necessary to ensure the group $H$ is abelian (see \cite{AljHaile_0}).
\begin{example} \label{nonabelian examples}
Let $\mathbb{Z}_{p^{2}} \rtimes \mathbb{Z}_{p^{2}} = \langle b, \sigma: \sigma b\sigma^{-1} = b^{p+1}\rangle$ and $L = \mathbb{Q}(x,y)(\zeta_{p})$ where $p$ is an odd prime. There exists a cohomology class $\alpha \in H^{2}(H, L^{*})$
such that the corresponding twisted group algebra $L^{\alpha}H$ is a division algebra.
\end{example}

We proceed to prove Theorem \ref{general structural theorem}, that is, we drop the assumption that the center is contained in the $e$-component.

\begin{proof}

We have $D = D_{e} \oplus D_{g_2} \oplus \cdots \oplus D_{g_{n}}$ and $L=Z(D_{e})$. As above, the group $G$ acts on $L$ and if $H$ denotes the kernel of the action, then  $K_{0} = L^{G/H}$. We want to show, as above, that we can find representatives $x_{h}, h\in H$, which centralize $D_{e}$ so that we have $D_{H} \cong D_{e}\otimes_{L}L^{\alpha}H$, for some $\alpha \in Z^{2}(H,L^{*})$. In order to apply Skolem-Noether we need to know $D_{e}$ is finite dimensional over its center $L$. We are given that $D$ is finite dimensional over $K$.

Claim: the center $K$ is contained in $D_{\tilde{H}}$, where $\tilde{H}$ is the subgroup of $H$ consisting of all elements $g \in G$ for which there exists $u_{g} \in D_{g}$ that induces an inner automorphism on $D_{e}$ by conjugation. Note that in that case there exists (as above) an element $x_{g} \in D_{g}$ that centralizes $D_{e}$. To prove the claim let $z\in K$.  Then there are elements $\alpha_e, \alpha_{g_2}, \dots, \alpha_{g_n} \in D_e$ such that
$$
z = \alpha_{e}u_{e} + \alpha_{g_2}u_{g_2} + \cdots + \alpha_{g_n}u_{g_n}.
$$
We show first $\alpha_{g}=0$ if $g \not\in H$. Conjugation by elements of $L$ centralizes $z$ and preserves all homogeneous components. But if $g\not\in H$,   we can find $l_{g} \in L^{*}$ which does not commute  with  $u_{g}$, so $\alpha_{g}=0$, as desired. Suppose now that there is an element $h\in H$ such that conjugation by $u_{h}$ does not give an inner automorphism  on $D_{e}$. Need to show $\alpha_{h} = 0$. Conjugation by any nonzero element of $D_{e}$ fixes $z$ and so must centralize $\alpha_{h}u_{h}$. This shows conjugation of $D_{e}$ by $\alpha^{-1}_{h}$ and $u_{h}$ determine the same automorphism and hence the action by $u_{h}$ is inner if  $\alpha$ is nonzero.  This proves that $K$ is contained in $D_{\tilde{H}}$,  as desired.  It follows that $K$ is contained in the center of $D_{\tilde{H}}$ and that $D_{\tilde{H}}$ is finite dimensional over its center.

For every element $h \in \tilde{H}$ there is a nonzero element $x_{h}\in D_h$ that centralizes $D_{e}$.   If $h_1,h_2\in \tilde{H}$ then the product $x_{h_1}x_{h_2}$ also centralizes $D_e$.  It follows that  $x_{h_1}x_{h_2}\in L^{*}x_{h_1h_2}$.  Therefore   the algebra generated over $L$ by $\{x_{h}: h \in \tilde{H}\}$ is isomorphic to a twisted group algebra $L^{\alpha}\tilde{H}$ where $\alpha : \tilde{H} \times \tilde{H} \rightarrow L^{*}$ is a $2$-cocycle. Now, if we let $L_{1} = Z(L^{\alpha}\tilde{H})$, we see that $L_{1}$, which  is a finite extension of  $L$,  is the center of $D_{\tilde{H}} \cong D_{e}\otimes_{L}L^{\alpha}\tilde{H}$.  But we have seen that $D_{\tilde{H}}$ is finite dimensional over its center  and so  we have that $D_{\tilde{H}}$ is finite dimensional over $L$.  This shows that $D_{e}$ is finite dimensional over its center $L$ and hence finite dimensional over $K_{0} = K\cap L=L^{G/H}$. Note that in fact $H = \tilde{H}$, because by Skolem-Noether, conjugation by every homogeneous element of $D_{H}$ is inner. We have proved $(2)$ and $(4)$.

The proof that $H$ is abelian and that the cocycle $\alpha$ is $G$-invariant (part $(3)$ and the first statement of part $(6)$) is the same as in Theorem \ref{structure theorem 1}.


For the proof of $(5)$ note that conjugation of $D_{H}$ by homogeneous elements induces an action of $G$ on $L_{1}$ whose kernel contains $H$. But then the kernel must be equal to $H$ because $L$ is contained $L_{1}$.

Proof of the second part of $(6)$: Let $S = \{h \in H: x_{h}x_{h'} = x_{h'}x_{h}\ \   $for all$\ \  h' \in H \}$, that is, $S$ is the set of elements $s\in H$ such that $x_s\in L_1$, the center of $L^\alpha H$.   We {\it claim} that in fact $L_1=L^\alpha S$:  It is clear that  $L^\alpha S\subseteq L_1$.  Conversely if $z=\sum_{h\in H}l_hx_h$ lies in $L_1$,  then for all $r\in H$, $x_rz=zx_r$.  But $H$ is abelian and $x_r$ commutes with the elements of $L$.  It follows that if $l_h\not=0$  then $h\in S$.

Because $L_1=L^\alpha S$ is the center of $L^\alpha H$ the group $H/S$ must be of central type, that is, isomorphic to $A\times A$ for some abelian group $A$.

Proof of $(7)$: The algebra $D_H$ is an $L_1$-central division of degree $deg(D_e)\sqrt{|H/S|}$  and $D$ is a $G/H$-crossed product.     Moreover $K\subseteq L_1\subseteq D_H\subseteq D$ and $[D:D_H]=[L_1:K]=|G/H|$,  so
$deg(D)=\sqrt{\dim_K(D)}= deg(D_H)|G/H|=   deg(D_e)\sqrt{H/S}|G/H|$, as desired.

\end{proof}


\medskip

We proceed to Theorem \ref{graded center}.

\begin{proof}
The set up is as in Theorem \ref{general structural theorem}. Recall that $L_{1} = Z(L^{\alpha}H)$ where $H$ is abelian and that $S$ is the subgroup of elements $s\in H$  such that   $x_{s}$ commutes with $x_{h}$ for all $h \in H$.  We have seen that  $L_{1} = L^{\alpha}S$ and that $S$ is normal in $G$.

Let $S_{1} = S \cap Z(G)$. Clearly $L_{2} = L^{\alpha}S_{1}$ is contained in $L_{1}$ and $L_2$ is normalized by $G$. Suppose $S_{1} = S$, that is, $S$ is central in $G$. We want to show the center $K$ is $H$-graded and in particular $G$-graded. Indeed, if $z\in K$, $z = \sum_{s \in S} \gamma_{s}x_{s}$, conjugation by nonzero homogeneous elements $y_{g}$ acts on one hand trivially on $z$ and on the other hand by multiplication of each $x_{s}$ by a nonzero scalar. Because the elements $x_{s}$ are linearly independent, the result follows.

Suppose now $S$ is not central in $G$, that is, $S_{1}$ is a proper subgroup of $S$. We want to show the center $K$ is not graded. 
Take $s_{0}\in S\setminus S_{1}$. Clearly $x_{s_{0}}$ is not in $K$.  For every  $g \in G$ we choose nonzero $y_{g} \in D_{g}$,  where,  if $h\in H$,  $y_h$ is chosen to commute with $D_e$.   Consider the element $z(x_{s_0}) = \sum_{g\in G}y_{g}x_{s_{0}}y_{g}^{-1}$. We claim $z(x_{s_0})\in K$: Because $x_{s_{0}}$ is in $L_{1}$ and conjugation by $y_{g}$ preserves $L_{1}$,  we see that $z(x_{s_0})$  lies in $L_1$ and in particular commutes with the elements of $D_e$. Therefore  it suffices  to show that for all $g\in G$, $z(x_{s_0})$ is fixed under conjugation by $y_{g}$. But this is clear because conjugation  by $y_{g}$ permutes the components modulo nonzero elements in $D_{e}$ and the elements of $D_e$ commute with $x_{s_{0}}$. So $z(x_{s_0})$ lies in $K$.  Note that if $r\in L^{*}$  then  $rx_{s_0}$ lies in $D_H$ and commutes with $D_e$,  so substituting we obtain $z(rx_{s_0})=\sum_{g\in G}y_{g}rx_{s_{0}}y_{g}^{-1}$ lies in $K$.  We {\it claim} we can find such an $r\in L^{*}$ such that  $z(rx_{s_0})$ is nonzero.  Because all of the homogeneous elements in the expression for $z(rx_{s_0})$ lie outside of $S_1$ it will follow that there are at least two nonzero terms in the sum and so we will have shown $K$ is nongraded, as desired.

To prove the claim, let
$C_{G}(s_{0})$ denote the centralizer of $s_{0}$ in $G$.   
For every $g \in C_{G}(s_{0})$ let  $g(x_{s_{0}})$ denote $y_{g}x_{s_{0}}y_{g}^{-1}$.   Then $g(x_{s_0}) = \beta_{g}x_{s_{0}}$ for some $\beta_g\in L^{*}$.  We let $\beta: C_{G}(s_{0}) \rightarrow L^{*}$ be given by $\beta(g)=\beta_g$. We  want to show  $\beta$ is a $1$-cocycle. Indeed, if $g_1,g_2\in C_G(s_0)$, then

$$g_{1}g_{2}(x_{s_{0}}) = g_{1}(g_{2}(x_{s_{0}}))$$

The left hand side yields
$$\beta(g_{1}g_{2})x_{s_{0}}$$

and  the right hand side yields
$$g_{1}(\beta(g_{2})x_{s_{0}}) = g_{1}(\beta(g_{2}))\beta(g_{1})x_{s_{0}}$$ so the result follows.

Now, because $s_0\in S$, we have $\beta(h)=1$ for all $h\in H$ and hence $\beta$  is induced by the inflation map from a $1$-cocycle (which we call $\bar{\beta}$) which represents  a class in $H^1(C_{G}(s_{0})/H,L^{*})$.  The group $C_{G}(s_{0})/H$ acts faithfully on $L$ and so, applying Hilbert's Theorem $90$ for $\bar{\beta}$, there is an element $t \in L^{*}$ such that  $\beta(g) = g(t)t^{-1}$ for every $g \in C_{G}(s_{0})$.
We can therefore write $g(x_{s_{0}}) = g(t)t^{-1}x_{s_{0}}$ which implies that for all $g \in C_{G}(s_{0})$,
$g(t^{-1}x_{s_0})= t^{-1}x_{s_0}$.
We now compute
$$z(t^{-1}x_{s_0})=\sum_{g\in G}y_{g}t^{-1}x_{s_{0}}y_{g}^{-1}=\sum_{g\in C_{G}(s_{0})}y_{g}t^{-1}x_{s_{0}}y_{g}^{-1}+\sum_{g\not\in C_{G}(s_{0})}y_{g}t^{-1}x_{s_{0}}y_{g}^{-1}=$$

$$\sum_{g\in C_{G}(s_{0})}g(t^{-1}x_{s_{0}})+\sum_{g\not\in C_{G}(s_{0})}y_{g}t^{-1}x_{s_{0}}y_{g}^{-1}=\sum_{g\in C_{G}(s_{0})}t^{-1}x_{s_{0}}+\sum_{g\not\in C_{G}(s_{0})}y_{g}t^{-1}x_{s_{0}}y_{g}^{-1}=
$$
$$t^{-1}|C_G(s_0)|x_{s_0}+\sum_{g\not\in C_{G}(s_{0})}y_{g}t^{-1}x_{s_{0}}y_{g}^{-1},
$$
which is nonzero because $|C_G(s_0)|$ is nonzero because the characteristic of the base field is zero! This finishes the proof of the claim.

We are left with determining the structure of $K$ when $K$ is graded, that is when $S$ is central in $G$.  In that case if $s\in S$,  then $C_G(s)=G$ and the argument in the preceding paragraph shows that there is a representative $x_s$ of $s$ such that $x_s$ lies in $K$.  It follows that $K=K_0[x_s : s\in S]=K_0^{\tilde\alpha}S$ where $\tilde\alpha:S\times S\rightarrow K^{*}_0$ is a two-cocycle and $\tilde\alpha$ is cohomologous to $\alpha$ in $H^2(S,L^{*})$, as desired.
\end{proof}

\section{construction of division algebras graded by $G$}

In this section we prove that given a group extension

$$\beta: 1 \rightarrow H \rightarrow G \rightarrow Q \rightarrow 1$$
where $H$ is abelian, a positive integer $d$, and a $Q$-invariant map $\phi: M(H) \rightarrow \mu_{n_{H}}$, $n_{H} = exp(H)$, there is a finite dimensional division algebra over its center, faithfully $G$-graded which realizes the given data. Here, the action of $Q$ on $M(H)$ is the induced action of $Q$ on $H$ and trivial on $\mu_{n_{H}}$.

We construct first division algebras $D$ in which $d$, the degree of $D_e$, equals $1$. That is, $L = D_e$ is a field. Thereafter we show how to pass from $d=1$ to an arbitrary $d$.

\begin{rem}\label{number of roots} In our construction the base field $k$ will contain $\zeta_{n}$, a primitive $n$th root of unity, where $n = l.c.m (d,exp(H))$. There  is no ``upper bound'' on the number of roots of unity in $k$; our construction works even if $k$ contains an algebraically closed field.
\end{rem}

Let us start our discussion with two special cases: in case $(1)$ we assume $H = \{1\}$, and in case $(2)$ we assume $G = H$.
The case where $H$ is trivial is well known. We recall here a proof which is attributed to S. Rosset and K. Brown (see \cite{Brown} and \cite{Rosset}). Their idea will appear in the proof of the general case.

Let
$$
1 \rightarrow R \rightarrow F \rightarrow G \rightarrow 1
$$
be a presentation of the finite group $G$ where the groups $F$ and $R$ are finitely generated free.

Taking quotient groups modulo the commutator subgroup $[R,R]$ we obtain
$$
\beta : 1 \rightarrow N = R/[R,R] \rightarrow \Gamma = F/[R,R] \rightarrow G \rightarrow 1.
$$

We want the action of $G$ on $N$ in this extension to be faithful,  that is,  we want the centralizer of $N$ in $\Gamma$ to be $N$ itself.  This can be arranged as follows:  Let $A$ be the free abelian group on $d_{G} (=|G|)$ generators and let $G$ act on $A$ by left translation on these generators, that is, the regular permutation representation of $G$.  We obtain the split exact sequence $1\rightarrow A\rightarrow A\rtimes G\rightarrow G\rightarrow 1$.  We take our group $F$ to be free on $2d_{G}$ generators $y_g,x_g$ for all $g\in G$ and map $F$ to $G$ by sending each  $y_g$ to the identity and each $x_g$ to $g$.  The group $F$ maps to $A\rtimes G$ by sending the $y_g's$ to generators of $A$ and each $x_g$ to $g$.  The resulting extension denoted above by $\beta$ maps to this split exact sequence and a simple diagram chase then shows  that $G$ acts faithfully on $N$.

By a theorem of Higman (\cite{Hig}, Theorem $2$), $\Gamma=F/[R,R]$ is torsion free.  Because $\Gamma$ is abelian by finite, it follows from a theorem of Brown that the group algebra $k\Gamma$ is a domain (see \cite{Brown}, Cor. $2$). Let $S$ denote the ring $kN$, a commutative subring of $k\Gamma$ and hence an integral domain.  The group $G$ acts faithfully on $S$ and the  crossed-product ring $S * G$ is a domain because it is isomorphic to $k\Gamma$. Let $R$ denote the fixed ring of the action of $G$ on $S$.  Then $R$ is the center of $S*G$.  Because the ring $R$ is the fixed ring of a finite group action,  the extension $S/R$ is integral. It follows that if $K$ denotes the field of fractons of $R$ and $L$ the field of fractions of $S$ then $L=SK$.  The extenion $L/K$ is a Galois extension with Galois group $G$.  The classical crossed-product algebra $L*G$ is a division algebra:  $L*G\cong (S*G)\otimes_RK$ and every element of $L*G$ can be written as $r^{-1}a$ where $r\in R$ is nonzero and $a\in S*G$.  It follows that $L*G$ is a domain and finite dimensional over $K$, so a division algebra. Morevoer $L*G$  realizes the given data for this case.

We next consider case $(2)$ in which $G = H$ is abelian and $\phi$ is any map $\phi: M(H)\rightarrow \mu_{n_{H}} \subset k^{*}$, $n_{H} = exp(H)$. Note that in this case every cohomology class is invariant because the inner action of $H$ acts trivially on $M(H)$.

Let
$$
1 \rightarrow R \rightarrow F_{H} \rightarrow H \rightarrow 1
$$
be a presentation of $H$.
We take quotients modulo $[R,F_{H}]$ and obtain the central extension
$$
1 \rightarrow R/[R,F_{H}] \rightarrow F_{H}/[R,F_{H}] \rightarrow H \rightarrow 1.
$$

The group $R/[R,F_{H}]$ is finitely generated abelian and so  is the direct product of a finite group $U$ and a finitely generated torsion free group $T\cong \mathbb{Z}^{r}$. We \underline{claim} (and it is well known) that the torsion part is precisely $M(H)$, the Schur multiplier of $H$.
Indeed, by the Hopf formula the Schur multiplier is given by $(R\cap [F_{H},F_{H}])/[R,F_{H}]$ so it is necessary and sufficient to show that
$$(R/[R,F_{H}])/ ((R\cap [F_{H},F_{H}])/[R,F_{H}]) \cong R/ (R\cap [F_{H},F_{H}])$$
is torsion free. But $R/ (R\cap [F_{H},F_{H}])$ is isomorphic to $R[F_{H},F_{H}]/[F_{H},F_{H}]$, a subgroup of the torsion free  group $F_{H}/[F_{H},F_{H}]$. This proves the claim.



Let $\Gamma_H=F_{H}/[R,F_{H}]$. The group $U$ is characteristic in $R/[R,F_{H}]$ and so normal in $\Gamma_{H}$. We therefore have the extension
$$\beta: 1 \rightarrow U \rightarrow \Gamma_{H} \rightarrow \Gamma_{H}/U \rightarrow 1$$

which is easily seen to be the inflation of the extension
$$\bar{\beta}: 1 \rightarrow U \rightarrow \Gamma_{H}/T \rightarrow H\rightarrow 1.$$

We \underline{claim} $\Gamma_{H}/U$ is torsion free:
Indeed, $$\Gamma_{H}/U = (F_{H}/[R,F_{H}])/((R \cap [F_{H}, F_{H}])/[R, F_{H}])\cong$$
$$ F_{H}/(R \cap [F_{H}, F_{H}]) = F_{H}/[F_{H}, F_{H}]$$ because $H$ is abelian.  The claim follows.

The extension $\beta$ yields a crossed product $(kU) ^{\beta}\Gamma_{H}/U$. Composing the map $\phi$, whose image is in $\mu_{n_{H}}$, with the $2$-cocycle $\beta$ we obtain a cocycle which represents a class in $H^{2}(\Gamma_{H}/U, \mu_{n_{H}})$ and which we denote by $\phi(\beta)$. Thus we obtain the twisted group algebra $k^{\phi(\beta)}\Gamma_{H}/U$. Because $\Gamma_{H}/U$ is torsion free and abelian by finite, a theorem of J. Moody (see for instance \cite{Passman}, Lemma, $37.8$) implies that this twisted group algebra   is an Ore domain.
Because $\beta$ is the inflation of $\bar{\beta}$,  the elements of $T$ in   $k^{\phi(\beta)}\Gamma_{H}/U$ are central and hence, by inverting the elements of $kT\setminus \{0\}$, we obtain the twisted group division algebra  $k(T)^{\phi(\bar{\beta})}H$ over the field $k(T)$.  Letting  $k(T)^{\phi(\bar{\beta})}H=\sum_{h\in H}k(T)x_h$ we compute easily that for all  $h_1,h_2\in H$,  the commutator $x_{h_1}x_{h_2}x_{h_1}^{-1}x_{h_2}^{-1}=\phi(h_1\wedge h_2)$ and so, by the comments preceding Definition \ref{realizable triple definition}, this division algebra realizes the given data.

We now turn to the proof of the general case.
Let
$$
1 \rightarrow R \rightarrow F_{G} \rightarrow G \rightarrow 1
$$
be a presentation of the finite group $G$ (We will be more precise concerning the choice of $F_G$ below).
We denote the map $F_{G} \rightarrow G$ by $\pi$.

For the subgroup $H$ we obtain the following induced extension,  where  $F_{H} = \pi^{-1}(H)$:
$$
1 \rightarrow R \rightarrow F_{H} \rightarrow H \rightarrow 1
$$

Taking quotients modulo $[R,F_{H}]$ in these two  extensions we obtain
$$
1 \rightarrow R/[R,F_{H}] \rightarrow F_{G}/[R,F_{H}] \rightarrow G \rightarrow 1
$$

\noindent and
$$1 \rightarrow R/[R,F_{H}] \rightarrow F_{H}/[R,F_{H}] \rightarrow H \rightarrow 1.
$$

As in case (2),  the  group $R/[R,F_{H}]$  is the direct product of a finite group $U=M(H)$ and a finitely generated torsion free group $T\cong \mathbb{Z}^{r}$.  Moreover $U$ is characteristic in $R/[R,F_{H}]$ and hence normal in both $\Gamma_H=F_{H}/[R,F_{H}]$ and $\Gamma_G=F_{G}/[R,F_{H}] $.  The group $\Gamma_G/U$ is torsion free: We have $\Gamma_G/U\cong (F_G/[R,F_H])/((R\cap[F_H,F_H])/[R,F_H])\cong F_G/[F_H,F_H]$ because $H$ is abelian, and $F_G/[F_H,F_H]$ is torsion free by Higman's theorem.

We have the extension:
$$
\beta: 1 \rightarrow U \rightarrow \Gamma_{G} \rightarrow \Gamma_{G}/U \rightarrow 1.
$$

Because $\phi: M(H)\rightarrow \mu_{n_{H}}$ is $G$ invariant, the map  $\phi\circ \beta:\Gamma_G/U
\times \Gamma_G/U\rightarrow \mu_{n_{H}}$ is a $2$-cocycle.  Therefore we can form  the
twisted group algebra
$k ^{\phi\circ \beta}\Gamma_{G}/U$.
The group $\Gamma_{G}/U$ is abelian by finite and we have seen that it is torsion free. Again as in case (2) we infer that  $k ^{\phi\circ \beta}\Gamma_{G}/U$ is an Ore domain.

We claim the ring of quotients of $k ^{\phi\circ \beta}\Gamma_{G}/U$ is a division algebra, $G$-graded, satisfying the desired conditions.

What do we have to prove? Because it is an Ore domain the ring of quotients is a division algebra which we denote by $D$. So we need to prove that $D$ is finite dimensional over its center, $G$-graded and the grading gives rise to the given group extension
$$
1 \rightarrow H \rightarrow G \rightarrow Q \rightarrow 1
$$
where $H$ is abelian, and the given $Q$-invariant map $\phi: M(H) \rightarrow \mu_{n_{H}}$.

We first consider the subalgebra  $k ^{\phi\circ \beta}\Gamma_{H}/U$.  This is the same as the dvision algebra obtained in case (2) and as we saw there the cocycle $\beta$ restricted to $\Gamma_{H}/U$ is inflated from the extension
$$ \bar{\beta}: 1 \rightarrow U \rightarrow \Gamma_{H}/T \rightarrow H\rightarrow 1.$$

Therefore the elements
of $T$ in $k^{\phi(\beta)}\Gamma_{H}/U$ are central and hence, by inverting the elements of $kT\setminus \{0\}$, we obtain the twisted group division algebra  $k(T)^{\phi(\bar{\beta})}H$ over the field $k(T)$.  Also, as in case (2), the cocycle on $H$ maps to $\phi$ as desired.

From the extension $1\rightarrow T\rightarrow \Gamma_{G}/U\rightarrow G\rightarrow 1$  we see that if we set $D_e=k(T)$ and choose representatives $x_g$ in $\Gamma_{G}/U$ for the elements $g$ in $G$, then $D=\sum_{g\in G}D_ex_g$ is faithfully $G$-graded. We are then left with showing that $H$ is exactly the kernel of the action of $G$ on $k(T)$.  For that we need to be more precise about our choice of $F_G$:   What is required is that in the sequence
$1 \rightarrow T \rightarrow F_{G}/U\rightarrow G \rightarrow 1
$ the kernel of the action of $G$ on $T$ is exactly $H$ (We already know the kernel contains H).    We proceed as in case (1):  Let $A$ be the free abelian group on $d_{Q}=|Q|$ generators and let $G$ act on $A$ by left translation on these generators.   We obtain the split exact sequence $1\rightarrow A\rightarrow A\rtimes G\rightarrow G\rightarrow 1$.  We take our group $F_G$ to be free on $d_{Q} + d_{G}$ generators $y_{g_1}, y_{g_2},  \dots, y_{g_{d_{Q}}}$,  where $g_1,g_2, \dots , g_{d_{Q}}$ are distinct coset representatives of $H$ in $G$ together with generators $x_g$ for all $g\in G$.  We map $F_G$ to $G$ by sending each  $y_{g_i}$ to the identity and each $x_g$ to $g$.  We map $F_G$  to $A\rtimes G$ by sending the ${y_{g_i}}'s$ to generators of $A$ and each $x_g$ to $g$.

With this choice of presentation for $G$ a straightforward calculation shows that we have an induced map of extensions from
$1 \rightarrow R/[R,F_{H}] \rightarrow F_{G}/[R,F_{H}] \rightarrow G \rightarrow 1
$ to $1\rightarrow A\rightarrow A\rtimes G\rightarrow G\rightarrow 1$, where the map on $G$ is the identity. Because $U$ is torsion and  $A$ is torsion free we get an induced map of extensions from $1 \rightarrow T \rightarrow \Gamma_{G}/U\rightarrow G \rightarrow 1 $ to $1\rightarrow A\rightarrow A\rtimes G\rightarrow G\rightarrow 1$. A simple diagram chase now shows that the kernel of the action of $G$ on $T$ is exactly $H$, as desired.

We conclude this section by showing how to pass from $d=1$ to an arbitrary $d$. To this end, we shall assume (as we may by Remark \ref{number of roots}) the field $k$ in the construction (and hence also $K_{0}$ and $L$) contains $\zeta_{d}$, a primitive $d$th root of unity.

Adjoin new indeterminates $a,b$ to $L$ and let
$$E=(a,b)_{L(a,b),d} = <x, y: x^{d} = a, y^{d} = b, yx = \zeta_{d}xy>$$
be the symbol algebra of degree $d$ over $L(a,b)$ determined by $a$ and $b$. Because $a$ and $b$ are indeterminates the algebra $E$ is a division algebra of degree $d$. Next, we are assuming we have constructed a division algebra $D$ realizing the extension $\beta$, $\phi$ and $d=1$. This means that $D$ is isomorphic to a crossed product of $L^{\alpha}H \ast Q$. We denote by $K_{0}$ its $e$-center. Now, it is well known that a rational extension $E(x)$ of $E$, $E$ any field, induces an injective map $res_{E(x)}^{E}: Br(E)\rightarrow Br(E(x))$ of the Brauer groups (see \cite{Fadeeev}, \cite{AusBrumer}) and hence extending the $e$-center of $L^{\alpha}H \ast Q$ to $K_{0}(a,b)$ yields a division algebra $L(a,b)^{\alpha}H \ast Q$ which realizes the same data as $D$. Finally, the algebra we are looking for is
$$
\tilde{D} \cong ((a,b)_{L(a, b), d}\otimes L^{\alpha}H)\ast Q
$$
where the outer action of $Q$ on $(a,b)_{L( a, b),d}\otimes L^{\alpha}H$ is determined via the outer action of $Q$ on $L^{\alpha}H$. It is clear the constructed algebra is $G$-graded and realizes the triple with the same $\beta$ and $\phi$ as $D$, but with $d$ rather than $1$. We need to show this is a division algebra. To this end it is convenient to view the algebra $\tilde{D}$ as a Hilbert twist of $D$. Indeed, extend $K_{0}$, the $e$-center of $D$, by the central indeterminate $a^{1/d}$ and denote it by $D_{K_{0}(a^{1/d})}$. This is a division algebra by the fact above and because a transcendental extension of the center yields an injective map
$$
res^{L}_{L(a^{1/d})}: Br(L) \rightarrow Br(L(a^{1/d})).
$$
Consider the Hilbert twist extension $D_{K_{0}(a^{1/d})}(\sigma,y)$. This is the ring of quotients of the twisted polynomial ring $D_{K_{0}(a^{1/d})}[\sigma,y]$ where $\sigma$ is the automorphism of $D_{K_{0}(a^{1/d})}$ mapping $a^{1/d}$ to $\zeta_{d}a^{1/d}$ and $y$ skew-commutes with elements of $D_{K_{0}(a^{1/d})}$ via the rule $zy = y\sigma(z)$. The algebra obtained is $G$-graded isomorphic to $\tilde{D}$ where $b = y^{d}$. We conclude that because $D_{K_{0}(a^{1/d})}$ is a division algebra, the algebra $D_{K_{0}(a^{1/d})}(\sigma,y)$ is a division algebra as desired (see \cite{Lam}, Example $1.8$). This completes the proof of the first part Theorem \ref{Main realizable triple theorem}. The second part follows easily; if the cocycle is nondegenerate, the center of $L^{\alpha}H$ is $L$ which is contained in $D_{e}$.

\section{Forms of finite dimensional $G$-graded simple algebras}

In this section we prove Theorem \ref{Simple algebras forms}.

\begin{proof} Let $A$ be a finite dimensional $G$-graded simple algebra over its $e$-center $F$, where $F$ is algebraically closed of characteristic zero. Let $P_{A} = \{H, (e, g_{2},\ldots, g_{s}), \alpha \}$ be a Bahturin, Sehgal, Zaicev presentation. Suppose first it admits a division algebra $G$-graded form $D$ over $K_{0}$ (the $e$-center of $D$). because $D$ is in particular a $G$-graded division algebra (that is, nonzero homogeneous elements are invertible) the group $H$ which appears in the presentation must be normal, all coset representatives appear and with equal frequency which we denote by $d$. Moreover, the class $[\alpha]$ is $G$-invariant (see \cite{AljadeffKarasikVerbally}, Corollary $1.13$).
To complete the proof of the necessity it remains to show $H$ is abelian. Now, because the $e$-component of $D$ extends to the $e$-component of $A$, and $D_{N}$ extends to $A_{N}$ for any subgroup $N$ of $G$, the result will follow if we show the elements in $D$ that centralize $Z(D_{e})$ belong to the $N$-homogeneous component where on the one hand $N$ is abelian and on the other hand, these are precisely the elements of $A_{H}$. The first statement follows from the structure theorem. For the proof of the second statement note that the center of $A_{e}$ is spanned over $F$ by the $r$ elements (idempotents)
$$u_{e}\otimes (e_{1,1} +\ldots+e_{d,d}), \ldots, u_{e}\otimes (e_{(r-1)d+1,(r-1)d+1} +\ldots+e_{rd,rd})$$
where $r = [G:H]$. because $H$ is normal in $G$, $A_{H}$ is spanned by elements of the form $u_{h}\otimes_{F}e_{i,i}$ which centralizes $Z(A_{e})$. Let us show these are all. Let
$$
w = \sum_{h\in H, i,j}\gamma_{(h, i,j)}u_{h}\otimes e_{i,j}, \gamma_{(h, i,j)}\in F
$$ be an element in $A$ that centralizes $Z(A_{e})$. Multiplying from left and right by idempotents $u_{e}\otimes (e_{(k-1)d+1,(k-1)d+1}+\ldots+e_{kd,kd})$, $k=1,\ldots,r$, we see $\gamma_{(h, i,j)} = 0$ unless $i$ and $j$ belong to the same block, that is $i,j \in \{(k-1)d+1,\ldots,kd\}$, some $k=1,\ldots,r$.

Conversely: We want to show that any such algebra $A$, with presentation $$P_{A} = \{H, (e, g_{2},\ldots, g_{s}), \alpha \}$$ has a division algebra form. First we say that it is sufficient to consider the case where $d = 1$. Indeed, if $P_{A} = \{H, (e, g_{2},\ldots, g_{s}), \alpha \}$, where each coset representative appears $d$ times, that is $dr = s$, we consider the corresponding algebra $\bar{A}$ with $P_{\bar{A}} = \{H, (e, \nu_{2},\ldots,\nu_{r}), \alpha\}$ (i.e. the same presentation but with $d=1$). If $\bar{D}$ is a division algebra form for $\bar{A}$, where its $e$-center contains $\zeta_{d}$, arguing as in last part of the previous section we obtain that the corresponding Hilbert twist is a division algebra $G$-graded form of $A$.

Changing notation we consider a $G$-graded simple with presentation $$P_{A} = \{H, (e, g_{2},\ldots, g_{s}), \alpha \},$$ where $H$ is abelian and normal, all coset representatives appear exactly once and $[\alpha]$ is $G$-invariant. We need to construct a division algebra form. Consider the triple $(\beta, \pi(\alpha), 1)$ induced by $P_{A}$, where
$\beta: 1\rightarrow H \rightarrow G \rightarrow G/H \rightarrow 1$ and $\pi(\alpha): Hom(M(H)\rightarrow \mu_{n_{H}}$, $n_{H} = exp(H)$, and let $D$ be a division algebra which realizes $(\beta, \pi(\alpha), 1)$. In particular we have $D_{e} = L$, $D_{H} \cong L^{\alpha{'}}H$, $K_{0} = L^{G/H}$ and $H$ is the centralizer of $L$ in $G$. We want to show $D$ is a $G$-graded form of $A$. Suppose extension of scalars of $D$ yields a $G$-graded simple algebra $A'$ with presentation $P_{A'} = \{H', (e, g'_{2},\ldots, g'_{s}), \alpha' \}$. because the $e$-component is commutative and its dimension is on the one hand $[G:H]$ (extension of the form) and on the other hand, from the presentation $P_{A}$, it is $[G:H']$, we have that $H$ and $H'$ have the same order and each coset representative in $P_{A'}$ must appear once. Furthermore because the homogeneous elements in $D$ which centralize the $e$-component are precisely the $H$-homogeneous elements, this carry out for $A'$ and this forces $H = H'$. To complete the proof we need to show that extension of scalars yields the cohomology class in $H^{2}(H,F^{*})$ (in the presentation $P_{A'}$) which corresponds by means of the Universal Coefficient Theorem to the map $\pi([\alpha]): M(H) \rightarrow \mu_{n_{H}}$. This follows from the fact that the $H$ identities in $D_{H}$ are determined by the map $\pi([\alpha]): M(H) \rightarrow \mu_{n_{H}}$ and the $H$-identities of $A'_{H}$ are determined by the map $\pi([\alpha']): M(H) \rightarrow \mu_{n_{H}}$ together with the fact that upon a ($char = 0$) field extension of the $e$-center the $H$-graded identities do not change.
\end{proof}
\begin{acknowledgment}
We thank the referee for his/her very valuable report.
\end{acknowledgment}

\end{document}